\newtheorem{theorem}{Theorem}
\newtheorem{lemma}[theorem]{Lemma}
\newtheorem{proposition}[theorem]{Proposition}
\newtheorem{corollary}[theorem]{Corollary}
\theoremstyle{definition}
\newtheorem{definition}[theorem]{Definition}
\theoremstyle{remark}
\newcommand{\Ad}{\mathrm{Ad}}
\newcommand{\ad}{\mathrm{ad}}
\newcommand{\rz}{\mathbb{R}}
\newcommand{\kz}{\mathbb{K}}
\newcommand{\cz}{\mathbb{C}}
\newcommand{\zz}{\mathbb{Z}}
\newcommand{\sz}{\mathbb{S}}
\newcommand{\cs}{\mathfrak{cs}}
\newcommand{\CS}{\mathfrak{CS}}
\newcommand{\cM}{\mathcal{M}}
\newcommand{\gl}{\mathrm{gl}}
\newcommand{\SO}{\mathrm{SO}}
\newcommand{\so}{\mathrm{so}}
\newcommand{\tr}{\mathrm{tr}}
\newcommand{\vol}{\mathrm{vol}}
\newcommand{\dd}{\mathrm{d}}
\newcommand{\GL}{{\mathrm{GL}}}
\newcommand{\SL}{\mathrm{SL}}
\newcommand{\End}{\mathrm{End}}
\newcommand{\GLnR}{\GL_n(\rz)}
\newcommand{\GLnC}{\GL_n(\cz)}
\newcommand{\Un}{\mathrm{U}(n)}
\newcommand{\SU}{\mathrm{SU}}
\newcommand{\U}{\mathrm{U}}
\newcommand{\Stab}{\mathrm{Stab}}
\begin{document}
\title[Invariants of stably trivial vector bundles]{Invariants of stably trivial vector bundles with connection}
\author{Sergiu Moroianu}
\address{Universitatea Bucure\c sti \\ Facultatea de Matematic\u a \\strada Academiei 14, and
Institutul de Matematic\u a al Academiei Rom\^ane, calea Grivi\c tei 21, Bucharest, Romania}
\email{moroianu@alum.mit.edu}
\date{\today}
\begin{abstract}
We define a Chern--Simons invariant of connections on stably trivial vector bundles over smooth manifolds, taking values in $3$-forms modulo closed forms with integral cohomology class.
We show an additivity property of this invariant for connections defined on a direct sum of bundles, under a certain block-diagonality condition on the curvature. As a corollary, we deduce an obstruction for conformally immersing a $n$-dimensional Riemannian manifold in a translation manifold of dimension $n+1$.
\end{abstract}
\maketitle

\section{Introduction}
By a striking result of Chern and Simons \cite{cs}, the Chern--Simons invariant of a compact oriented Riemannian $3$-manifold $(X,g)$ conformally immersed in the flat Euclidean space $\rz^4$ must vanish modulo $\zz$, showing, for instance, that the real projective space $\rz P^3$ cannot be conformally immersed in $\rz^4$. This result was extended in a recent paper by \v{C}ap, Flood and Mettler \cite{cfm} to equiaffine immersions, a notion that we recall below. Motivated by those results, we prove in this note a more general vanishing result for the 
Chern--Simons invariant of a stably trivial real vector bundle with connection over an arbitrary smooth manifold (Theorem \ref{tma}). When the bundle is the tangent bundle of a Riemannian manifold, we derive an obstruction for conformal immersions as hypersurfaces in translation manifolds:

\begin{theorem}\label{gencsd}
Let $f:X^n\to Y^{n+1}$ be a conformal immersion of an oriented Riemannian manifold 
$(X,g)$ of dimension $n$ in a $n+1$-dimensional translation manifold $Y$. Then, for every smooth singular $3$-cycle $C\in C_3(X,\zz)$, the evaluation of 
$\CS(\nabla^X)$ on $C$ must vanish modulo integers:
\[\int_C\CS(\nabla)\in\zz.\]
\end{theorem}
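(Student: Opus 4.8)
The plan is to exhibit $TX$ as a direct summand of a flat, canonically trivialized vector bundle coming from the translation structure on $Y$, and to read off the vanishing from the additivity of $\CS$ in Theorem \ref{tma}. First I would record the submanifold geometry. Write $h := f^*g_Y$, where $g_Y$ is the flat Riemannian metric induced by the translation structure on $Y$; by hypothesis $h$ lies in the conformal class of $g$. The translation structure gives $TY$ a canonical global frame (the coordinate vector fields, whose transition functions are the identity), hence a flat connection $\nabla^Y$ of trivial holonomy, having $g_Y$ as its Levi-Civita connection. Pulling back by $f$, the bundle $E := f^*TY \to X$ is trivial, carries the flat connection $\nabla^E := f^*\nabla^Y$ of trivial holonomy, and splits $h$-orthogonally as $E = TX \oplus \nu$, with $\nu$ the normal line bundle. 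Since $X$ is oriented and $E$ is trivial, $\det\nu \cong \det E \otimes (\det TX)^{-1}$ is trivial, so $\nu$ is trivial and $TX \oplus \underline{\rz} \cong E \cong \underline{\rz}^{n+1}$ is stably trivial; in particular $\CS(\nabla^X)$ is defined.

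Next I would evaluate $\CS$ on both descriptions of $\nabla^E$. On the one hand, $\nabla^E$ is the product connection in its global frame of parallel sections, so $\CS(\nabla^E) = 0$ modulo closed forms with integral periods, and its curvature vanishes identically, hence is in particular block-diagonal for $E = TX \oplus \nu$. On the other hand, the Gauss formula identifies the $TX$-component of $\nabla^E$ with the Levi-Civita connection $\nabla^h$ of the induced metric, and its $\nu$-component with the normal connection $\nabla^\nu$; choosing the unit normal section $\xi$ (available since $\nu$ is oriented and trivial) one gets $\langle \nabla^\nu \xi, \xi\rangle = \tfrac{1}{2}\, d\langle\xi,\xi\rangle = 0$, so $\nabla^\nu \xi = 0$, $\nabla^\nu$ is the trivial connection, and $\CS(\nabla^\nu) = 0$. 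Applying the additivity of Theorem \ref{tma} to $\nabla^E$ on $E = TX \oplus \nu$ then gives
\[
0 \;=\; \CS(\nabla^E) \;=\; \CS(\nabla^h) + \CS(\nabla^\nu) \;=\; \CS(\nabla^h)
\]
modulo closed forms with integral periods.

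It remains to pass from $h$ back to $g$. The Chern--Simons form of a Levi-Civita connection depends on the metric only through its conformal class, up to the addition of an exact form (the conformal invariance observed by Chern and Simons \cite{cs}); since $h$ is conformal to $g$, this yields $\CS(\nabla^X) = \CS(\nabla^h)$ modulo closed forms with integral periods, which is $0$ by the previous paragraph. Evaluating on a smooth singular $3$-cycle $C$ then gives $\int_C \CS(\nabla^X) \in \zz$, as claimed.

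I expect the main work to lie in bookkeeping rather than analysis: identifying precisely the block-diagonal part of $f^*\nabla^Y$ with the pair $(\nabla^h,\nabla^\nu)$, verifying that $\nabla^\nu$ is the trivial connection, and checking that the normalizations in the definition of $\CS$ make both a trivial-holonomy flat connection on a trivial bundle and a product connection represent the zero class. The conformal-invariance input, although classical, must be invoked with care here, since $\CS$ is a $3$-form modulo integral closed forms rather than merely a number modulo $\zz$.
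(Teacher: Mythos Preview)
Your argument is correct and follows essentially the same route as the paper's proof: reduce to the isometric case by conformal invariance, pull back the trivial connection on $TY$, split $f^*TY=TX\oplus\nu$, observe that the induced connection on the line factor is trivial, and apply additivity together with the vanishing of $\CS$ for a trivial connection. The paper packages the middle steps into Theorem~\ref{tma} (which in turn rests on Theorem~\ref{tm} and Lemma~\ref{main}), whereas you unpack the submanifold geometry directly via the Gauss formula; both amount to the same computation.

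Two small citation corrections are in order. First, what you call ``the additivity of Theorem~\ref{tma}'' is really Lemma~\ref{main} (or its consequence Theorem~\ref{tm}); Theorem~\ref{tma} is already the vanishing statement itself, not an additivity lemma. Second, for the conformal-invariance step you should invoke Proposition~\ref{confinv} rather than \cite{cs}: the classical result in \cite{cs} is stated for $3$-manifolds with trivial tangent bundle, while here $n$ is arbitrary and $TX$ is only stably trivial, which is exactly the situation handled by Proposition~\ref{confinv}.
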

 The definition of a translation manifold is recalled in Section \ref{aci}. We should stress that $X$ is not assumed to be parallelizable; to define the Chern--Simons invariant of $\nabla^X$ it suffices that $TY$ be trivial, which entails that $TX$ is stably trivial. As a consequence, we deduce, for instance, that $3$-dimensional lens spaces cannot be conformally embedded in any (possibly incomplete) Riemannian $4$-manifold with trivial global holonomy. 

We first review Chern--Simons invariants of trivial bundles using a principal-bundle-free formalism, based on the calculus of matrix-valued forms. We then extend the definition to stably trivial bundles with connection; prove the additivity property; and finally deduce various obstructions to immersions into translation manifolds. In the Appendix we show that a certain $3$-form on the general linear group, constructed in terms of the Maurer--Cartan form, is integral, and we compute the Riemannian Chern--Simons invariant of $3$-dimensional lens spaces.

For several reasons, we do not treat here higher-dimensional Chern--Simons forms: first, because a large part of the current interest in Chern--Simons invariants seems to concentrate in dimension $3$; secondly, since the tangent bundle of oriented manifolds is trivial in dimension $3$, but in general not stably trivial in higher dimensions; and thirdly, because there are technical difficulties in extending the additivity property (Lemma \ref{main}) beyond dimension $3$. 
With these limitations, the paper is essentially self-contained, and could hopefully also be used as an efficient introduction to the subject, free of the customary reliance on the topology of classifying spaces.

\section{Matrix-valued forms}

Let $X$ be a smooth manifold. Denote by $\cM_{kk'}$ the space of real matrices with $k$ rows and $k'$ columns, and let $X\times \cM_{kk'}\to X$ be the trivial vector bundle. A \emph{matrix-valued form} on $X$ is a section in the tensor product bundle $\Lambda^*X\otimes_{\rz} \cM_{kk'}$. For $k=k'=1$ this is the usual algebra of differential forms on $X$. 

Besides the evident associative product (denoted below by concatenation)
\[(\Lambda^pX\otimes \cM_{kk'})\times (\Lambda^qX\otimes \cM_{k'k''})\mapsto \Lambda^{p+q}X\otimes \cM_{kk''},
\]
there are two other natural operations on matrix-valued forms. 

First, we have the first-order differential operator $\dd$, the exterior differential on $X$ twisted by the trivial connection (also denoted  $\dd$) on the trivial vector bundle 
$\cM_{kk'}$: \begin{align*}
\dd:\Lambda^pX\otimes \cM_{kk'}\to \Lambda^{p+1}X\otimes \cM_{kk'}, &&\dd(\alpha\otimes A)= (\dd \alpha)\otimes A+(-1)^p\alpha\dd A.
\end{align*}
Here $\alpha\in \Lambda^pX$ is a form and $A:X\to\cM_{kk'}$ is a section in $X\times \cM_{kk'}$. The operator $\dd$ has the signed Leibniz property
\[\dd(uv)= \dd(u)v+(-1)^{\deg(u)} u\dd(v),
\]
where, by definition, the degree of a matrix-valued form $u\in \Lambda^pX\otimes \cM_{kk'}$ is $\deg(u)=p$.

The second natural operation is the trace, defined when $k=k'$:
\begin{align*}
\tr:\Lambda^pX\otimes \gl_k(\rz)\to \Lambda^{p}X, &&\tr(\alpha\otimes A)= \alpha\tr(A).
\end{align*}
Although matrix-valued forms do not form a super-commutative algebra, their trace has the sign-commutation property
\[\tr(uv)=(-1)^{pq}\tr(vu).
\]
for every $u\in \Lambda^pX\otimes \cM_{kk'}$ and $v\in \Lambda^qX\otimes \cM_{k'k}$.

Combining these two operations, it is useful to note the identity 
\[\dd(\tr(u))=\tr(\dd(u)).\]

The same properties hold for complex matrix-valued forms. We shall distinguish between the two cases by a subscript in the complex trace $\tr_\cz$, in order to distinguish it from its real counterpart.

\section{Chern--Simons forms and invariants}

Let $E$ be a topologically trivial real
vector bundle of rank $n$ with connection $\nabla$ over $X$. Once we fix a frame in $E$, we write $\nabla=\dd+\omega$ for some matrix-valued $1$-form $\omega$, and we define the Chern--Simons $3$-form by
\[
\cs(\omega)=-\tfrac{1}{16\pi^2} \tr\left(\omega\dd \omega +\tfrac{2}{3}\omega^3\right)\in\Lambda^3(X).
\]
Denote by $\Omega=\dd\omega+\omega^2\in \Lambda^2(X,\gl_n\rz)$ the matrix-valued curvature $2$-form of $\nabla$ in the fixed frame. 
One can alternately describe the Chern--Simons form as
\[\cs(\omega)=-\tfrac{1}{16\pi^2} \tr\left(\omega\Omega -\tfrac{1}{3}\omega^3\right).
\]
A standard computation gives
\[\dd (\cs(\omega)) = -\tfrac{1}{16\pi^2}\tr(\Omega^2)\in\Lambda^4(X).
\]

\begin{definition}
A complex $k$-form $\omega\in\Lambda^k(X)\otimes_\rz \cz$ on a smooth manifold $X$ is said to be \emph{integral} if it is closed, and its cohomology class belongs to the image of the natural map $H^k(X,\zz)\to H^k(X,\cz)$. 
The space of integral $k$-forms on $X$ is denoted $\Lambda^k_\zz(X)$.
\end{definition}
Equivalently, $\omega\in\Lambda^k_\zz(X)$ if and only if $\int_C\omega\in\zz$ for every oriented smooth singular $k$-cycle $C$ with integer coefficients.

In a different trivialization obtained by (right-)multiplying the given frame in $E$ by a map $a:X\to \GLnR$, the connection $1$-form becomes $\omega'=a^{-1}\omega a+a^{-1}\dd a$. By an easy computation using the properties of matrix-valued forms, the associated Chern--Simons form changes into
\begin{equation}\label{csop}
\cs(a^{-1}\omega a+a^{-1}\dd a)=\cs(\omega) 
+\tfrac{1}{48\pi^2} \tr\left((a^{-1}\dd a)^3\right) -\tfrac{1}{16\pi^2}  \dd( \dd a a^{-1}\omega) \in\Lambda^3(X).
\end{equation}
The form $a^{-1}\dd a$ is the pull-back by $a$ of the Maurer--Cartan form $\mu\in\Lambda^1(\GLnR,\gl_n\rz)$. It is a fact, detailed in Appendix \ref{apa} for the sake of completeness, that for every $n\geq 3$, the form $\frac{1}{48\pi^2}\tr(\mu^3)$ is closed, defines an integral cohomology class on $\GLnR$, and its evaluation on the $3$-homology cycle 
$\SO(3)\subset \GLnR$ equals $-1$, thus motivating the choice of the normalization constant. 
Equation \eqref{csop} thus shows that $\cs(\omega')$
agrees with $\cs(\omega)$ up to an integral $3$-form. It follows that the class of $\cs(\omega)$ modulo $\Lambda^3_\zz(X)$ gives rise to a well-defined element $\CS(\nabla)\in\Lambda^3(X)/\Lambda^3_\zz(X)$, independent of the trivialization, that we call the \emph{Chern--Simons invariant} of $\nabla$. Whenever the first Pontriaghin form $\tr(\Omega^2)$ of $\nabla$ vanishes, $\cs(\omega)$ is closed, and its cohomology class is independent of the choice of trivialization up to an integral class.

\subsection{Vanishing results}
The vanishing of a Chern--Simons invariant means that it is represented by a closed Chern--Simons form whose associated de Rham cohomology class is integral, i.e., it lives in the image of the natural map $H^3(X,\zz)\to H^3(X,\cz)$. The above discussion ensures that the vanishing of a Chern--Simons invariant is independent of the choice of trivialization used for defining the connection $1$-form, so it is a property of the connection, rather than of the connection \emph{form} in some trivialization.

Here are some evident instances of vanishing results:
\begin{lemma}\label{lvan}
\begin{enumerate}
\item The Chern--Simons invariant of a trivial connection vanishes.
\item The Chern--Simons form of a flat connection on a trivial vector bundle of rank $1$ vanishes.
\item If $\omega\in \Lambda^1(X)$ defines an involutive distribution $\ker(\omega)\subset TX$ of codimension $1$ in $X$, i.e., $\omega \wedge \dd \omega=0$, then the Chern--Simons form of the connection $\dd+\omega$ on the trivial bundle $\rz\times X$ vanishes.
\end{enumerate}
\end{lemma}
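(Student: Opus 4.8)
The plan is to verify each of the three statements by reducing to the explicit formula for $\cs(\omega)$ and the identities for matrix-valued forms established in the previous section.

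For (1), a trivial connection is $\nabla = \dd$, so in the standard frame $\omega = 0$. Then $\cs(0) = -\tfrac{1}{16\pi^2}\tr(0 \cdot \dd 0 + \tfrac23 \cdot 0^3) = 0$, which is trivially a closed integral $3$-form; hence $\CS(\nabla) = 0$ in $\Lambda^3(X)/\Lambda^3_\zz(X)$. The only thing to note is that this conclusion is frame-independent, which we already know from the discussion following \eqref{csop}.

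For (2), with $n = 1$ the connection form $\omega$ is an ordinary $1$-form and the matrices commute, so $\omega^3 = \omega \wedge \omega \wedge \omega = 0$ (any $1$-form wedged with itself vanishes), and likewise $\tr(\omega \dd\omega) = \omega \dd\omega$. Flatness means $\Omega = \dd\omega + \omega^2 = \dd\omega = 0$. Therefore $\cs(\omega) = -\tfrac{1}{16\pi^2}\,\omega\wedge\dd\omega = 0$ identically, not merely modulo integral forms.

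For (3), I would use the alternate expression $\cs(\omega) = -\tfrac{1}{16\pi^2}\tr(\omega\Omega - \tfrac13\omega^3)$. Since the bundle has rank $1$, again $\omega^3 = 0$ and $\tr$ is the identity, so $\cs(\omega) = -\tfrac{1}{16\pi^2}\,\omega\wedge\Omega = -\tfrac{1}{16\pi^2}\,\omega\wedge(\dd\omega + \omega\wedge\omega) = -\tfrac{1}{16\pi^2}\,\omega\wedge\dd\omega$. The involutivity hypothesis $\omega\wedge\dd\omega = 0$ then gives $\cs(\omega) = 0$ directly. There is essentially no obstacle here: all three parts are immediate once one observes that rank-$1$ kills the cubic term and collapses the trace, and that the hypotheses in (2) and (3) kill the remaining quadratic term. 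The only subtlety worth a sentence is that in (2) and (3) the \emph{form} itself vanishes (a stronger statement than vanishing of the invariant), whereas in (1) one should remark that the vanishing of the invariant, being frame-independent, holds for any trivial connection regardless of the chosen frame.
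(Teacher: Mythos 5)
Your proof is correct and follows essentially the same route as the paper's: the parallel frame kills both terms in part (1), rank one kills the cubic term by antisymmetry in parts (2) and (3), and flatness (respectively the Frobenius condition $\omega\wedge\dd\omega=0$) kills the remaining term $\omega\wedge\Omega$. Your closing remark that parts (2) and (3) give vanishing of the form itself, not merely of the invariant, is accurate and consistent with the statement as written.
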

\begin{proof}
By definition, in a parallel frame, the connection $1$-form of a trivial connection vanishes, so both terms in the Chern--Simons form vanish as well.

If the rank of the bundle is $1$, the term $\omega^3$ vanishes identically by anti-symmetry. The term $\omega\Omega$ also vanishes if the curvature is $0$, proving the second claim. 

The third claim is similar, in that the term $\Omega=\dd\omega$ does not need to vanish anymore, however the Frobenius integrability condition implies
$\omega\Omega=0$.
\end{proof}

\subsection{Complex Chern--Simons forms and invariants}
For trivial complex vector bundles endowed with a $\cz$-linear connection, more or less the same definition makes sense using the complex trace, resulting in a complex-valued $3$-form on $X$. In that case, the form $\frac{1}{24\pi^2}\tr_\cz(\mu^3)$ is integral on $\GLnC$ by Lemma \ref{lemintc}, so the natural normalization constant for the Chern--Simons invariant is in this case
$1/8\pi^2$:
\[
\cs_\cz(\omega)=-\tfrac{1}{8\pi^2} \tr_\cz\left(\omega\dd \omega +\tfrac{2}{3}\omega^3\right)\in\Lambda^3(X)\otimes_\rz\cz.
\]
This form has the same invariance properties as its real counterpart, eventually defining a class in 
$\CS_\cz(\nabla) \in(\Lambda^3(X)\otimes\cz)/\Lambda^3_\zz(X)$ independent of trivializations, called the complex Chern--Simons invariant of $\nabla$.

Since every complex vector bundle has a subjacent real structure, it is natural to inquire about a possible relationship between the real and the complex Chern--Simons invariants of a trivial complex vector bundle with complex connection. This relation is simply:
\begin{lemma}
$\CS(\nabla)=\Re(\CS_\cz(\nabla))$.
\end{lemma}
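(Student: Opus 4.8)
The plan is to compute both invariants in one and the same trivialization. Fix a complex trivialization of $E$, with complex frame $(e_1,\dots,e_n)$ and connection form $\omega\in\Lambda^1(X)\otimes_\rz\gl_n(\cz)$. The underlying real bundle $E_\rz$, of rank $2n$, is then trivialized by the real frame $(e_1,\dots,e_n,ie_1,\dots,ie_n)$, and because $\nabla$ is $\cz$-linear the connection form of $\nabla$ in this real frame is $\rho(\omega)$, where $\rho\colon\gl_n(\cz)\to\gl_{2n}(\rz)$ is the realification sending $A+iB$ (with $A,B$ real) to $\left(\begin{smallmatrix}A&-B\\ B&A\end{smallmatrix}\right)$, extended entrywise to matrix-valued forms. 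I would then record two elementary algebraic properties of $\rho$: it is an $\rz$-algebra homomorphism, so on matrix-valued forms $\rho(uv)=\rho(u)\rho(v)$ and, being $\rz$-linear with constant coefficients, $\rho(\dd u)=\dd\rho(u)$; and it satisfies the trace identity $\tr(\rho(M))=2\,\Re(\tr_\cz(M))$, which extends to $\tr(\rho(u))=2\,\Re(\tr_\cz(u))$ for $\gl_n(\cz)$-valued forms $u$, since the scalar coefficients of $u$ may be taken to be real forms.

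Granting these, the computation is immediate:
\[
\cs(\rho(\omega))=-\tfrac{1}{16\pi^2}\tr\!\left(\rho(\omega)\dd\rho(\omega)+\tfrac23\rho(\omega)^3\right)
=-\tfrac{1}{16\pi^2}\tr\rho\!\left(\omega\dd\omega+\tfrac23\omega^3\right)
=-\tfrac{1}{8\pi^2}\,\Re\,\tr_\cz\!\left(\omega\dd\omega+\tfrac23\omega^3\right)
=\Re(\cs_\cz(\omega)).
\]
So the asserted identity already holds at the level of Chern--Simons \emph{forms} in compatible trivializations; the factor $2$ produced by $\tr\circ\rho$ is precisely what is absorbed by the difference between the normalization constants $1/8\pi^2$ and $1/16\pi^2$.

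Finally, to descend to invariants, I would note that $\Re$ carries $\Lambda^3_\zz(X)$ into itself (if $\int_C\eta\in\zz$ then $\int_C\Re(\eta)=\Re\!\big(\int_C\eta\big)=\int_C\eta$), so $\Re(\CS_\cz(\nabla))$ is a well-defined class in $\Lambda^3(X)/\Lambda^3_\zz(X)$; and since $\CS(\nabla)$ is independent of the chosen real trivialization of $E_\rz$, it may be computed using the one induced by our complex trivialization, giving $\CS(\nabla)=[\cs(\rho(\omega))]=[\Re(\cs_\cz(\omega))]=\Re(\CS_\cz(\nabla))$. There is no deep obstacle here; the two points that demand care are the bookkeeping of the factor $2$ against the two normalization constants, and the verification that $\cz$-linearity of $\nabla$ is exactly what forces the block form $\left(\begin{smallmatrix}A&-B\\ B&A\end{smallmatrix}\right)$ of the induced real connection form — this is the one step I would spell out in full.
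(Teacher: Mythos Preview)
Your argument is correct and is essentially identical to the paper's: the paper writes $A_\rz$ for what you call $\rho(A)$, records that this inclusion is an algebra map with $\tr(A_\rz)=2\Re(\tr_\cz(A))$, and concludes by the same form-level identity with the factor $2$ absorbed by the normalization constants. Your additional remark that $\Re$ preserves $\Lambda^3_\zz(X)$, ensuring $\Re(\CS_\cz(\nabla))$ is well-defined, is a welcome detail the paper leaves implicit.
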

\begin{proof}
For $A\in\End_\cz(\cz^n)$ define $A_\rz\in\End_\rz(\cz^n)$ to be the image of $A$ through the natural inclusion $\End_\cz(\cz^n)\hookrightarrow\End_\rz(\cz^n)$.
It is evident that the inclusion is an algebra map, and that $\tr(A_\rz)=2\Re(\tr_\cz(A))$. Then 
\[\left(\omega\dd \omega +\tfrac{2}{3}\omega^3\right)_\rz= \left(\omega_\rz\dd \omega_\rz +\tfrac{2}{3}\omega_\rz^3\right).
\]
The different normalization constants in the definitions of $\cs$ and $\cs_\cz$ account for the factor $2$ relating the real and the complex traces. 
\end{proof}
 
By tensoring with $\cz$, every trivial real vector bundle $E$ with connection gives rise to a complex vector bundle $E_\cz$ endowed with a complex connection. Any trivialization of $E$ will induce a complex trivialization of $E\otimes_\rz\cz$, and the connection forms in $E$ and $E_\cz$ are the same $\gl_n(\rz)$-valued $1$-form $\omega$.
From the difference in normalization constants, it follows that 
$\cs(\omega)=\frac{1}{2}\cs_\cz(\omega)$.

\section{Stable Chern--Simons invariants}
Let $E_1,E_2$ be two trivial vector bundles (real or complex) over $X$ with connections $\nabla_1,\nabla_2$. We say that a connection $\nabla$ on $E:=E_1\oplus E_2$ \emph{extends} $\nabla_1$ and $\nabla_2$ if
\begin{align}\label{conext}
\Pi_1\nabla \Pi_1=\nabla_1,&& \Pi_2\nabla \Pi_2=\nabla_2
\end{align}
where $\Pi_1,\Pi_2$ are the projectors on $E_1$, respectively $E_2$, in $E$. 
If we fix trivializations of $E_1$ and $E_2$, we write $\nabla_1=\dd+\omega_1$, $\nabla_2=\dd+\omega_2$, and therefore $\nabla=\dd+\omega$, where
\[\omega=\begin{bmatrix} \omega_1 & A\\B&\omega_2
\end{bmatrix}.
\]
Here $A$, respectively $B$ are some matrix-valued $1$-forms of suitable dimensions.
Denote by $R$ the curvature of $\nabla$, and by $\Omega$ its matrix-valued $2$-form in the fixed trivialization.
For $i,j\in\{1,2\}$ denote by $R_{ij}$ the $E_j\to E_i$ component of the curvature:
\[R_{ij} = \Pi_i R \Pi_j.
\]
\begin{lemma}\label{main}
If $R_{12}=0$ and $R_{21}=0$, then in the real case
\[\cs(\omega)=\cs(\omega_1)+\cs(\omega_2),
\]
while in the complex case $\cs_\cz(\omega)=\cs_\cz(\omega_1)+\cs_\cz(\omega_2)$.
\end{lemma}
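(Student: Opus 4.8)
The plan is to fix trivializations of $E_1$ and $E_2$, write $\omega$ in the block form displayed above, and evaluate the alternate expression $\cs(\omega)=-\tfrac{1}{16\pi^2}\tr\!\left(\omega\Omega-\tfrac13\omega^3\right)$ block by block. The only tools needed are the block decomposition of matrix multiplication, the fact that the trace of a block matrix is the sum of the traces of its diagonal blocks, and the sign-commutation identity $\tr(uv)=(-1)^{\deg u\,\deg v}\tr(vu)$ for matrix-valued forms.

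First I would record the block form of the curvature. Writing $\Omega_1=\dd\omega_1+\omega_1^2$ and $\Omega_2=\dd\omega_2+\omega_2^2$, a direct expansion of $\Omega=\dd\omega+\omega^2$ gives diagonal blocks $\Omega_1+AB$ and $\Omega_2+BA$ and off-diagonal blocks $R_{21}=\dd B+B\omega_1+\omega_2B$ and $R_{12}=\dd A+\omega_1A+A\omega_2$; thus the hypothesis $R_{12}=R_{21}=0$ says exactly that the off-diagonal blocks of $\Omega$ vanish, although $A$ and $B$ themselves need not. Forming $\omega\Omega$ and tracing its diagonal blocks then gives $\tr(\omega\Omega)=\tr(\omega_1\Omega_1)+\tr(\omega_2\Omega_2)+\tr(\omega_1AB)+\tr(\omega_2BA)$.

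Next I would expand $\omega^3=\omega\cdot\omega^2$ in blocks. Apart from $\omega_1^3$ and $\omega_2^3$, the first diagonal block produces $\omega_1AB$, $AB\omega_1$, $A\omega_2B$, and the second produces $B\omega_1A$, $BA\omega_2$, $\omega_2BA$. Each of these six cross terms is, up to an obvious regrouping, a product of a degree-$2$ and a degree-$1$ matrix-valued form, so the sign in $\tr(uv)=(-1)^{\deg u\,\deg v}\tr(vu)$ is $+1$; applying that identity, the first three have trace $\tr(\omega_1AB)$ and the last three have trace $\tr(\omega_2BA)$. Hence $\tr(\omega^3)=\tr(\omega_1^3)+\tr(\omega_2^3)+3\tr(\omega_1AB)+3\tr(\omega_2BA)$.

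Substituting these two expressions into $\cs(\omega)=-\tfrac{1}{16\pi^2}\left(\tr(\omega\Omega)-\tfrac13\tr(\omega^3)\right)$, the cross terms cancel exactly: the coefficient $3$ on the cubic cross terms is killed by the $\tfrac13$, matching the coefficient $1$ coming from $\tr(\omega\Omega)$, and what remains is $-\tfrac{1}{16\pi^2}\left(\tr(\omega_1\Omega_1)-\tfrac13\tr(\omega_1^3)\right)-\tfrac{1}{16\pi^2}\left(\tr(\omega_2\Omega_2)-\tfrac13\tr(\omega_2^3)\right)=\cs(\omega_1)+\cs(\omega_2)$. The complex case is word for word the same computation with $\tr_\cz$ and the constant $\tfrac1{8\pi^2}$, neither change touching the cancellation. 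I do not expect a genuine obstacle here: the only delicate point is the bookkeeping of the sign-commutation identity for the six cubic cross terms, and the saving grace is that all degrees involved are $1$ and $2$, so every sign is $+1$ and the collapse to $3\tr(\omega_1AB)+3\tr(\omega_2BA)$ — which is precisely what makes the $\tfrac13$ cancellation work — is clean.
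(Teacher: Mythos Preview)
Your proposal is correct and follows essentially the same block-computation strategy as the paper. The only difference is ordering: you invoke $R_{12}=R_{21}=0$ before computing $\tr(\omega\Omega)$ (so $\Omega$ is already block-diagonal and only one cross term of each type appears), whereas the paper computes $\tr(\omega\Omega)$ in full generality, obtains the residual $\tr(A\,\Omega_{21})+\tr(B\,\Omega_{12})$, and only then applies the hypothesis --- a cosmetic rearrangement that does not change the argument.
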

The proof is formally the same in both cases, so we disregard the subscript $\cz$ in the trace whenever the bundles are complex.
\begin{proof}
Compute the various ingredients of the curvature and of the Chern--Simons form of $\omega$ as follows:
\begin{align*}
\omega^2 ={}& \begin{bmatrix} \omega_1^2 +AB& \omega_1 A+A\omega_2\\B\omega_1+\omega_2B&\omega_2^2+BA
\end{bmatrix}\\
\Omega ={}& \begin{bmatrix} \dd \omega_1+\omega_1^2 +AB& \dd A+\omega_1 A+A\omega_2
\\ \dd B+ B\omega_1+\omega_2B&\dd \omega_2+\omega_2^2+BA
\end{bmatrix}\\
\omega\Omega = {}&
\begin{bmatrix} \omega_1\dd \omega_1+\omega_1^3 +\omega_1AB +A\dd B
+AB\omega_1+A\omega_2 B& \!\!\!\!\!\!\!\! *
\\ \!\!\!\!\!\!\!\!\!\!\!\!\!\!\!\!\!\!\!\!\!\!\!\!\!\!\!\!\!\!\!\!\!\!\!\!\!\!\!\!\!\!\!\!\!\!\!\!  * & \!\!\!\!\!\!\!\!\!\!\!\!\!\!\!\!\!\!\!\!\!\!\!\!\!\!\!\!\!\!\!\!\!\!\!\!\!\!\!\!\!\!\!\!\!\!\!\!\!\!\!
B\omega_1 A+B\dd A+BA\omega_2+\omega_2\dd \omega_2+\omega_2^3+\omega_2BA
\end{bmatrix}\\
\omega^3 ={}& \begin{bmatrix} \omega_1^3 +\omega_1AB+AB\omega_1+A\omega_2 B& 
*\\
*&
B\omega_1 A+\omega_2BA+\omega_2^3+BA\omega_2
\end{bmatrix}.
\end{align*}
We have omitted the off-diagonal terms in $\omega \Omega$ and in $\omega^3$, replacing them with the symbol $*$, as they are eventually irrelevant for the trace. From the above, we compute the traces appearing in the Chern--Simons form:
\begin{align*}
\tr(\omega^3) ={}& \tr(\omega_1^3) +\tr(\omega_2^3) + 3\tr(\omega_1AB)+3\tr(\omega_2BA)\\
\tr(\omega\Omega) = {}& \tr(\omega_1\Omega_1)+\tr(\omega_2\Omega_2)+\tr(A\dd B)+\tr(B\dd A)
+3\tr(\omega_1AB) +3\tr(\omega_2BA)\\
\cs(\omega)={}&\cs(\omega_1)+\cs(\omega_2)+\tr[A(\dd B+B\omega_1+\omega_2 B)]
+\tr[B(\dd A+\omega_1 A+A\omega_2)].
\end{align*}
We now recognise in the last two terms the off-diagonal terms in the curvature: 
$\Omega_{21}=\dd B+B\omega_1+\omega_2 B$, 
$\Omega_{12}=\dd A+\omega_1A+A\omega_2$.
These terms vanish by hypothesis, proving our claim.
\end{proof}

A similar result appears in \cite{Mil74} for the Simons invariants of complete metrics of constant positive sectional curvature. Note also a related \emph{multiplicative} statement for Cheeger-Simons differential characters in \cite[Theorem 4.7]{CheegSim73}. 

As a consequence, we get an additivity property of Chern--Simons invariants of trivial bundles with connections. More specifically, if $E_1,E_2$ are trivial bundles with connections and $\nabla$ is a connection on $E:=E_1\oplus E_2$ extending $\nabla_1$ and $\nabla_2$ as in \eqref{conext}, then, whenever the curvature endomorphism of $\nabla$ is diagonal with respect to the splitting, we have
\[\CS(\nabla)=\CS(\nabla_1)+\CS(\nabla_2) \mod\Lambda^3_\zz(X).
\]
We shall prove a stable version of the above identity shortly. Let us recall that a \emph{stably trivial vector bundle} is a bundle which admits a trivial complement in a trivial bundle. More precisely, $E$ is stably trivial if there exists $E'$ trivial with $E\oplus E'$ also trivial.

\begin{proposition}\label{inde}
Let $E\to X$ be a stably trivial real (respectively complex) vector bundle with connection $\nabla^E$, and $E'$ a trivial complement (i.e.,  such that $E\oplus E'$ is trivial) endowed with a trivial connection $\nabla'$. Then the invariant $\CS(\nabla\oplus\nabla')$ (respectively $\CS_\cz(\nabla\oplus\nabla')$) is independent of 
the choice of $(E',\nabla')$.
\end{proposition}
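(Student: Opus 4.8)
The plan is to squeeze both invariants $\CS(\nabla^E\oplus\nabla')$ and $\CS(\nabla^E\oplus\nabla'')$ between one invariant living on a threefold direct sum, and to read the desired equality off two applications of Lemma \ref{main}. Let $(E',\nabla')$ and $(E'',\nabla'')$ be two choices of trivial complement with trivial connection. A trivial connection exists only on a trivializable bundle, so $E'$ and $E''$ are themselves trivial; combined with the triviality of $E\oplus E'$ and of $E\oplus E''$, this makes $E\oplus E'\oplus E''$ trivial whether we group it as $(E\oplus E')\oplus E''$ or as $(E\oplus E'')\oplus E'$. Hence the Chern--Simons invariant of the direct-sum connection $\nabla:=\nabla^E\oplus\nabla'\oplus\nabla''$ is well defined.

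First I would view $E\oplus E'\oplus E''$ as $(E\oplus E')\oplus E''$. The connection $\nabla$ extends $\nabla^E\oplus\nabla'$ and $\nabla''$ in the sense of \eqref{conext}, and its curvature is block diagonal for this splitting, so the mixed curvature components vanish and Lemma \ref{main} applies. In the invariant form recorded just after that lemma it yields
\[
\CS(\nabla)=\CS(\nabla^E\oplus\nabla')+\CS(\nabla'')=\CS(\nabla^E\oplus\nabla'),
\]
the last equality because the trivial connection $\nabla''$ has vanishing invariant by Lemma \ref{lvan}(1). Next I would run the same argument with the roles of $E'$ and $E''$ exchanged. The reordering isomorphism $E\oplus E'\oplus E''\cong E\oplus E''\oplus E'$ intertwines the two direct-sum connections, and $\CS$ is unaffected by an isomorphism of trivial bundles with connection (pull a trivialization back through the isomorphism; the connection $1$-forms, and hence the Chern--Simons forms $\cs(\cdot)$, then coincide), so $\CS(\nabla)$ may equally be computed from the grouping $(E\oplus E'')\oplus E'$, giving $\CS(\nabla)=\CS(\nabla^E\oplus\nabla'')$. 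Comparing the two expressions for $\CS(\nabla)$ proves $\CS(\nabla^E\oplus\nabla')=\CS(\nabla^E\oplus\nabla'')$; the complex case is identical with $\CS_\cz$ and $\cs_\cz$ throughout.

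I do not foresee a real obstacle: no new analysis or topology enters, only a twofold use of Lemma \ref{main} together with the triviality of the invariant of a trivial connection. The point that needs attention is purely bookkeeping --- verifying that every bundle in sight is trivial so that each invariant written down makes sense, and that for both regroupings the hypotheses of Lemma \ref{main} (the extension condition \eqref{conext} and the vanishing of the off-diagonal curvature blocks) hold, both of which are automatic for a direct-sum connection.
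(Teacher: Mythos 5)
Your proposal is correct and follows essentially the same route as the paper: form the triple sum $E\oplus E'\oplus E''$, apply Lemma \ref{main} to the two groupings $(E\oplus E')\oplus E''$ and $(E\oplus E'')\oplus E'$, kill the trivial summand's contribution via Lemma \ref{lvan}, and compare. The only (harmless) addition is your explicit remark about the reordering isomorphism, which the paper leaves implicit.
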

\begin{proof}
We only treat the real case, the proof in the complex case being formally identical.
Choose another trivial complement $E''$ with trivial connection $\nabla''$ such that $E\oplus E''$ is trivial. 

Notice that $\nabla\oplus\nabla'\oplus \nabla''$ is the direct sum connection on $E\oplus E'\oplus E''$ with respect to the splitting in the direct sum of the trivial bundles $E\oplus E'$ and $E''$, so in particular its curvature is diagonal with respect to this splitting. We use Lemma \ref{main} to deduce that, once we fix trivializations of $E\oplus E'$ and of $E''$,
\[\cs(\nabla\oplus\nabla'\oplus \nabla'')=\cs(\nabla\oplus\nabla')+\cs(\nabla'').\]
But the last term vanishes by Lemma \ref{lvan} since $\nabla''$ is trivial, hence we have proved the equality $\cs(\nabla\oplus\nabla'\oplus \nabla'')=\cs(\nabla\oplus\nabla')$ in the chosen trivializations. Passing to the residue modulo $\Lambda^3_\zz(X)$, we get 
$\CS(\nabla\oplus\nabla'\oplus \nabla'')=\CS(\nabla\oplus\nabla')$.

Similarly, fixing trivializations of $E\oplus E''$ and of $E'$, we have $\cs(\nabla\oplus\nabla'\oplus \nabla'')=\cs(\nabla\oplus\nabla'')$, and we deduce that independently of trivializations, 
$\CS(\nabla\oplus\nabla'\oplus \nabla'')=\CS(\nabla\oplus\nabla'')$. 

We deduce the equality $\CS(\nabla\oplus\nabla')=\CS(\nabla\oplus\nabla'')$.
\end{proof}

\begin{definition}
The \emph{Chern--Simons invariant} $\CS(\nabla)$ of a stably trivial vector bundle $E$ with connection 
$\nabla$ is defined as the class $\CS(\nabla\oplus\nabla')\in \Lambda^3(X)/\Lambda^3_\zz(X)$, for any trivial vector bundle $E'$ endowed with a trivial connection $\nabla'$ such that $E\oplus E'$ is trivial. (By Proposition \ref{inde}, that class is independent of the choices involved.)
\end{definition}

\section{Conformal invariance}
If $(X^3,g)$ is a parallelizable compact $3$-Riemanian manifold, the Chern--Simons invariant of the Levi-Civita connection is known to be conformally invariant \cite{cs}. We extend this result when the tangent bundle $TX$ is only \emph{stably} trivial as follows:
\begin{proposition}\label{confinv}
Let $(X^n,g)$ be a Riemannian manifold of dimension $n$ with stably trivial tangent bundle, and $\nabla$ its Levi-Civita connection. Fix a trivial complement of rank $k$, $L$, to $TX$, with its trivial connection $\dd$, and a global frame $s$ in the trivial bundle $TX\oplus L$ over $X$. Let 
$\omega\in C^\infty(X,\gl_{n+k})$ be the connection $1$-form of $\nabla\oplus \dd$ in the fixed framing.
Then the Chern--Simons $3$ form $\cs(\omega)$ is conformally invariant up to exact forms on $X$.
\end{proposition}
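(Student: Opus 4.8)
The plan is to reduce the statement to the classical conformal-invariance computation of Chern and Simons, using the additivity property (Lemma \ref{main}) to isolate the tangent-bundle part. Under a conformal change $g\mapsto e^{2\varphi}g$, the Levi-Civita connection changes by an explicit tensorial term: $\nabla^{g'}_XY=\nabla^g_XY+(X\varphi)Y+(Y\varphi)X-g(X,Y)\mathrm{grad}\,\varphi$. In a fixed frame, the connection $1$-form $\omega$ of $\nabla\oplus\dd$ changes to $\omega+\eta$, where $\eta\in\Lambda^1(X,\gl_{n+k})$ is the matrix-valued $1$-form built from $\dd\varphi$ via this Koszul-type formula, supported in the $TX$-block and vanishing on the $L$-block. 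I would record the explicit shape of $\eta$ and note that the curvature of the new connection stays block-diagonal with respect to $TX\oplus L$ (the $L$-block remains flat and trivial, the cross terms remain zero), so Lemma \ref{main} applies to both $\omega$ and $\omega+\eta$.

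Next, I would compute $\cs(\omega+\eta)-\cs(\omega)$ directly. Expanding the definition $\cs(\omega)=-\tfrac1{16\pi^2}\tr(\omega\dd\omega+\tfrac23\omega^3)$ and collecting terms, the difference is a sum of trace terms in $\omega$, $\eta$, $\dd\omega$, $\dd\eta$, and their products. I would exploit: (i) the cyclicity-with-signs of the trace; (ii) $\dd\tr(u)=\tr(\dd u)$, which lets many terms be written as exact forms; and (iii) the crucial algebraic features of $\eta$ coming from conformal geometry — namely that $\eta$ has a very special symmetric/antisymmetric structure and that the relevant contractions $\tr(\eta^2\cdot)$, $\tr(\eta^3)$ vanish. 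The outcome I expect is $\cs(\omega+\eta)=\cs(\omega)+\dd\beta$ for an explicit $2$-form $\beta$, possibly after also invoking that $\dd\varphi\wedge\dd\varphi=0$ and that $\varphi$ is a function (so its ``matrix'' part is scalar on the $TX$ block, reducing many triple products). This mirrors the original Chern--Simons argument, which I would cite as the template, the only new point being that the $L$-summand contributes nothing to any of these terms because $\eta$ vanishes there.

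Concretely, the steps in order: (1) write down the conformal variation formula for $\nabla^g$ and translate it into the shift $\omega\mapsto\omega+\eta$ in the fixed frame $s$, identifying $\eta$ explicitly as a $\gl_n$-valued (hence $\gl_{n+k}$-valued, block-diagonal) $1$-form depending linearly on $\dd\varphi$; (2) check that the curvature of $\nabla^{g'}\oplus\dd$ is still block-diagonal, so $\cs$ of the whole thing equals $\cs$ of the $TX$-block plus $\cs$ of the (zero) $L$-block, by Lemma \ref{main}; (3) compute $\cs(\omega+\eta)-\cs(\omega)$ by brute expansion, using $\tr(uv)=(-1)^{\deg u\deg v}\tr(vu)$, $\dd\tr=\tr\dd$, and the special structure of $\eta$ to see every surviving term is exact; (4) conclude $\cs(\omega+\eta)\equiv\cs(\omega)$ modulo $\dd\Lambda^2(X)$.

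The main obstacle is step (3): controlling the numerous trace terms in the variation and verifying that all non-exact contributions cancel. The cancellation is not formal — it uses genuine facts about the conformal Koszul term $\eta$ (its trace-free/symmetry properties as an endomorphism-valued $1$-form, and that its ``scalar part'' is $\dd\varphi$), exactly as in \cite{cs}. A secondary subtlety is bookkeeping the block structure: one must be careful that although $\eta$ lives only in the $TX$-block, products like $\omega\eta$ and $\eta\dd\omega$ are taken in $\gl_{n+k}$; here block-diagonality of $\omega$ for the summand $TX\oplus L$ (with $L$ trivial) ensures the computation genuinely reduces to the rank-$n$ case and no $L$-cross-terms appear. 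I would therefore phrase the argument so that, after step (2), everything takes place in $\gl_n$ and coincides verbatim with the classical computation.
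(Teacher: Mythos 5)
Your overall strategy is viable and close in spirit to the original Chern--Simons computation, but it diverges from the paper's proof, and the step you flag as the ``main obstacle'' is exactly where the mathematical content lies and where your sketch is not yet a proof. The paper does not compute the finite difference $\cs(\omega+\eta)-\cs(\omega)$ at all: it differentiates along the path $g_t=e^{2tf}g$ and uses the general variation formula $\partial_t\cs(\omega_t)=\dd\tr(\dot\omega_t\omega_t)+2\tr(\dot\omega_t\Omega_t)$, so that only the single non-exact term $\tr(\dot\omega\,\Omega)$ survives; this is then killed by the antisymmetry of the curvature together with the first Bianchi identity, after observing (as you do) that $\dot\omega$ is block-diagonal with vanishing $L$-block, so $L$ contributes nothing. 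The infinitesimal route linearizes away all the higher-order terms that your finite computation has to confront.

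The concrete gap in your step (3) is this. Expanding, one finds
\begin{equation*}
\cs(\omega+\eta)-\cs(\omega)=-\tfrac{1}{16\pi^2}\Bigl[\,2\tr(\eta\Omega)+\tr(\eta\,\dd\eta)+2\tr(\eta^2\omega)+\tfrac{2}{3}\tr(\eta^3)+\dd\tr(\eta\omega)\Bigr],
\end{equation*}
and your claim that ``the relevant contractions $\tr(\eta^2\cdot)$, $\tr(\eta^3)$ vanish'' is wrong as stated for the first of these: $\tr(\eta^2\omega)$ is not tensorial (it depends on the frame through $\omega$) and does not vanish on its own. It must be grouped with $\tr(\eta\,\dd\eta)$ into the covariant expression $\tr(\eta\wedge\dd^\nabla\eta)$, and then one uses $\dd^\nabla\eta=\Omega'-\Omega-\eta^2$ to reduce the whole bracket to $\tr(\eta\Omega)+\tr(\eta\Omega')-\tfrac{1}{3}\tr(\eta^3)$ modulo exact forms. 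Each of these three terms does vanish, but each requires an argument you have not supplied: $\tr(\eta\Omega)$ and $\tr(\eta\Omega')$ via the curvature symmetries and first Bianchi identity (the latter applied to the conformally changed metric, with $\eta$ rewritten relative to $g'$), and $\tr(\eta^3)=0$ by checking that every monomial in the expansion of $\tr(\eta(X)\eta(Y)\eta(Z))$ --- a polynomial in inner products of $X,Y,Z$ and $\nabla f$ --- is symmetric in two of its arguments and so dies under antisymmetrization. Without these verifications the proposal asserts, rather than proves, the proposition; with them it becomes a correct (if longer) alternative to the paper's variational argument. Your step (2) handling of the $L$-block via Lemma \ref{main} is fine but also unnecessary once one notes, as the paper does, that $\eta$ (or $\dot\omega$) vanishes identically on the $L$-block.
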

\begin{proof}
Let $f\in C^\infty(X)$ be a conformal factor, $\nabla^t$ the connection associated to the conformal metric $g_t=e^{2tf}g$, and $\omega_t\in C^\infty(X,\gl_{n+k})$ the connection $1$ form in the fixed framing. 
An easy computation valid for for every $1$-parameter variation of the Chern--Simons form gives:
\begin{align*}
\partial_t\cs(\omega_t)_{|t=0}  = {}& \tr(\dot\omega_t\dd\omega_t+\omega_t\dd\dot\omega_t +2\dot\omega_t\omega_t^2)_{|t=0} \\
={}& \dd\tr(\dot\omega\omega) +2\tr(\dot\omega(\dd\omega+\omega^2)).
\end{align*}
The first term is exact. In the second term, $\dd\omega+\omega^2$ is the matrix of the curvature of the connection $\nabla\oplus \dd$ in the fixed framing $s$. A direct application of Koszul's formula shows, in our case of conformal variations, that the variation of the Levi-Civita connection is
\[\partial_t(\nabla^t_UV) = U(f)V+V(f)U-g(U,V)\nabla f,
\]
in particular it is independent of $t$. So $\dot\omega$ is the matrix in the fame $s$ of the 
block-diagonal endomorphism (with respect to the splitting $TX\oplus L$)
\[\begin{bmatrix}\partial_t(\nabla^t)_{|t=0} &0\\0&0
\end{bmatrix}\]
One can compute $\tr(\dot\omega(\dd\omega+\omega^2))$ in any local basis, that we now choose to be compatible with the splitting:
\[\tr(\dot\omega(\dd\omega+\omega^2))=\tr\left(\partial_t(\nabla^t)_{|t=0} R\right)
\]  
where $R$ is the curvature endomorphism of $\nabla$. So the bundle $L$ does not contribute at all to this term. In any orthonormal frame $\{e_j\}$ on $X$,
\begin{align*}\partial_t(\nabla^t)_{|t=0} ={}&\dd f\otimes I +\sum_i e^i\otimes \left(\dd f\otimes e_i - e^j\otimes\nabla f\right)\\
R={}&\tfrac{1}{4}\sum_{j,k,\alpha,\beta}R_{jk\alpha\beta}e^j\wedge e^k\otimes\left(e^\alpha\otimes e_\beta-e^\beta\otimes e_\alpha\right).
\end{align*} 
Using the anti-symmetry of the Riemann curvature coefficients, we get $\tr((\nabla f\otimes I) R)=0$.
The remaining term gives
\[\sum_{i,j,k,l}R_{jkil}\partial_l(f),
\]
which vanishes by the first Bianchi identity.
\end{proof}

\section{Vanishing results}

\begin{theorem}\label{tm}
Let $E\to X$ be a stably trivial vector bundle endowed with a connection $\nabla^E$. Assume that 
there exists a line bundle $L\to X$ with a flat connection $\nabla^L$, and a trivial connection 
$\nabla$ on the bundle $E\oplus L$ such that
\begin{align*}
\nabla^E= \Pi_E\nabla \Pi_E, && \nabla^L= \Pi_L\nabla \Pi_L,
\end{align*}
where $\Pi_E$ and $\Pi_L=1-\Pi_E$ are the projectors on the $E$, respectively $L$ factors in $E\oplus L$.
Then $\CS(\nabla^E)=0$.
\end{theorem}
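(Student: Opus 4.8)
The plan is to reduce the theorem to the additivity Lemma \ref{main} together with the vanishing results of Lemma \ref{lvan}. We are given a trivial connection $\nabla$ on $E\oplus L$, so $\CS(\nabla)=0$ by Lemma \ref{lvan}(1), and also $\CS(\nabla^E)$ is by definition $\CS(\nabla^E\oplus\nabla')$ for any trivial complement; here the natural choice is to argue directly on $E\oplus L$. The obstacle to applying Lemma \ref{main} verbatim is that $\nabla$ need not have diagonal curvature with respect to the splitting $E\oplus L$ — indeed the hypothesis only pins down the diagonal blocks $\Pi_E\nabla\Pi_E$ and $\Pi_L\nabla\Pi_L$, not the off-diagonal ones, and a priori the curvature $R$ of $\nabla$ (which is zero!) decomposes into blocks $R_{EE}, R_{EL}, R_{LE}, R_{LL}$ that are \emph{not} individually zero.

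**So the first step** is to exploit that $\nabla$ is \emph{trivial}, hence flat: $R=0$, and therefore every block $R_{ij}=\Pi_i R\Pi_j$ vanishes, in particular $R_{EL}=0$ and $R_{LE}=0$. This is precisely the block-diagonality hypothesis of Lemma \ref{main}. **The second step** is to fix a global parallel frame for $\nabla$ on $E\oplus L$, so that in this frame $\nabla=\dd$, i.e. the connection $1$-form $\omega$ of $\nabla$ is zero, and to observe that in the induced trivializations of $E$ and $L$ the connection forms $\omega_E$ of $\nabla^E$ and $\omega_L$ of $\nabla^L$ are exactly the diagonal blocks of $\omega$. Caution: with $\omega=0$ the diagonal blocks are zero, which would force $\nabla^E$ trivial — that is too strong, so in fact one must not use a $\nabla$-parallel frame but rather an \emph{arbitrary} trivialization of $E\oplus L$, in which $\omega=\begin{bmatrix}\omega_E & A\\ B & \omega_L\end{bmatrix}$ with $A,B$ generally nonzero, and apply Lemma \ref{main} to this $\omega$: since $R_{EL}=R_{LE}=0$ we get $\cs(\omega)=\cs(\omega_E)+\cs(\omega_L)$.

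**The third step** evaluates the two summands. The left-hand side $\cs(\omega)$ is the Chern--Simons form of a trivial connection written in some frame; by the transformation law \eqref{csop} relating it to the parallel frame (where it is $0$), $\cs(\omega)$ differs from $0$ by an integral form plus an exact form, hence $\cs(\omega)\in\Lambda^3_\zz(X)+\dd\Lambda^2(X)$, so its class modulo $\Lambda^3_\zz(X)$ is the class of an exact form — in any case $\CS(\nabla)=0$. For the term $\cs(\omega_L)$: $L$ has rank $1$ and $\nabla^L$ is flat, so by Lemma \ref{lvan}(2) the form $\cs(\omega_L)$ vanishes identically. Putting these together, modulo $\Lambda^3_\zz(X)$ we obtain $0=\CS(\nabla)=\CS(\nabla^E)+\CS(\nabla^L)=\CS(\nabla^E)+0$, hence $\CS(\nabla^E)=0$.

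**The one point requiring care** — and what I expect to be the main obstacle to write cleanly — is matching the abstract definition $\CS(\nabla^E):=\CS(\nabla^E\oplus\nabla'')$ (over an \emph{auxiliary} trivial complement with trivial connection) against the present setup, where the complement $L$ is only a \emph{line} bundle carrying a merely \emph{flat} connection, and is not assumed trivial as a bundle. To handle this I would invoke Proposition \ref{inde} after first enlarging: take the trivial bundle $E\oplus L$ (which \emph{is} trivial by hypothesis) as the relevant trivialization datum, note that $\nabla^E\oplus\nabla^L$ is a connection on it whose curvature is block-diagonal (Step 1), so Lemma \ref{main} applies to compute $\cs$ in any frame, and then observe that adding a further genuinely-trivial complement $(E'',\nabla'')$ and using Lemma \ref{main} once more shows $\CS(\nabla^E\oplus\nabla^L)=\CS(\nabla^E\oplus\nabla^L\oplus\nabla'')=\CS(\nabla^E)+\CS(\nabla^L)$, where the last step identifies the first summand with the defining expression for $\CS(\nabla^E)$ (since $L\oplus E''$ need not be trivial, one may instead run the telescoping of Proposition \ref{inde} with $E\oplus L$ in place of a trivial complement, which is legitimate because all that is used there is triviality of the total space and diagonality of the curvature). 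Once the bookkeeping is arranged, everything else is a direct appeal to Lemmas \ref{main}, \ref{lvan} and the normalization \eqref{csop}.
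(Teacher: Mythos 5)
Your proposal is correct and follows essentially the same route as the paper: since $\nabla$ is trivial its curvature vanishes, so the block-diagonality hypothesis of Lemma \ref{main} holds automatically, and combining the resulting additivity with Lemma \ref{lvan} (vanishing for the trivial connection $\nabla$ and for the flat rank-one connection $\nabla^L$) yields $\CS(\nabla^E)=0$. Your extra care in matching this with the stable definition of $\CS(\nabla^E)$ goes beyond what the paper's own (very terse) proof records; note also that the hypotheses force $L$ to be a trivial line bundle (stable triviality of $E$ gives $w_1(E)=0$, hence $w_1(L)=w_1(E\oplus L)-w_1(E)=0$), which disposes of your worry about $L$ possibly being non-trivial.
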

\begin{proof}
The conditions of Lemma \ref{main} are clearly satisfied since $\nabla$ is trivial. Moreover, $\CS(\nabla)=0$ by the same reason. Also, $\CS(\nabla^L)=0$ by Lemma \ref{lvan}. It follows from Lemma \ref{main} that $\CS(\nabla^E)=0$.
\end{proof}

From this, we recover some classical, and some more recent, vanishing results modulo $\zz$.

\begin{corollary}[Chern--Simons \cite{cs}]
Let $f:X^3\to \rz^4$ be a conformal immersion of an oriented Riemannian compact $3$-fold 
$(X,g)$ in the flat Euclidean space $\rz^4$, and $\omega$ the connection $1$-form of the Levi-Civita connection of $g$ in some global frame on $X$. Then 
\[\int_X\cs(\omega)\in\zz.\]
\end{corollary}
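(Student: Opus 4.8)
The plan is to derive this from Theorem \ref{tm} by exhibiting, for a conformal immersion $f:X^3\to\rz^4$, a flat line bundle $L$ together with a trivial connection on $TX\oplus L$ whose diagonal blocks recover the Levi-Civita connection of $g$ on $TX$ and a flat connection on $L$. The natural candidate for $L$ is the normal bundle $\nu$ of the immersion $f$, which is a line bundle over $X$; its induced connection $\nabla^\nu$ from the flat connection on $T\rz^4=\rz^4\times\rz^4$ is flat for a trivial reason, namely that the curvature of a connection induced on a rank-one subbundle of a flat bundle is, up to the second fundamental form terms which enter only off-diagonally, the restriction of the ambient (zero) curvature; more directly, the curvature of any metric connection on a line bundle is the obstruction to local parallel sections, and on an oriented rank-one bundle that curvature is a $2$-form which here must vanish by the flatness of the ambient connection composed with the fact that $\nu$ carries a global unit normal away from being an issue, so $\nabla^\nu$ is flat.

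The key steps, in order, are as follows. First I would invoke conformal invariance: by Proposition \ref{confinv} the class of $\cs(\omega)$ in $\Lambda^3(X)/\Lambda^3_\zz(X)$ (equivalently, whether $\int_X\cs(\omega)\in\zz$) is unchanged under a conformal change of metric, so it suffices to treat the metric $g=f^*g_{\mathrm{eucl}}$ induced by a genuine isometric immersion — but since $f$ is a conformal immersion we may replace $g$ by the induced metric in its conformal class and assume $f$ is isometric. Second, the Gauss map gives a bundle isomorphism $f^*T\rz^4\cong TX\oplus\nu$, and under this splitting the flat pullback connection $f^*\dd$ on $f^*T\rz^4$ has diagonal blocks equal to the Levi-Civita connection $\nabla^{TX}$ of $g$ (Gauss formula) and the normal connection $\nabla^\nu$ (Weingarten formula), with the off-diagonal blocks given by the second fundamental form. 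Third, $f^*T\rz^4$ is trivial (pullback of a trivial bundle), so $\nabla:=f^*\dd$ is a trivial connection on $TX\oplus\nu$ with $\Pi_{TX}\nabla\Pi_{TX}=\nabla^{TX}$ and $\Pi_\nu\nabla\Pi_\nu=\nabla^\nu$. Fourth, $\nabla^\nu$ is flat: its curvature $R^\nu=\Pi_\nu R^{f^*\dd}\Pi_\nu$ vanishes because $R^{f^*\dd}=f^*(\text{curvature of }\dd)=0$. Hence all hypotheses of Theorem \ref{tm} are met with $E=TX$, $L=\nu$, and we conclude $\CS(\nabla^{TX})=0$, which is exactly the assertion $\int_X\cs(\omega)\in\zz$ for any global frame $\omega$ on the parallelizable $3$-manifold $X$ (oriented $3$-manifolds being parallelizable, so $TX$ is genuinely trivial here and the stable Chern--Simons invariant reduces to the ordinary one).

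The one point requiring a little care — and what I expect to be the only real obstacle — is the identification of the block-diagonal pieces of the flat ambient connection with $\nabla^{TX}\oplus\nabla^\nu$, i.e.\ checking that the curvature endomorphism $R^{f^*\dd}=0$ really is block-diagonal (trivially so, being zero) and, more substantively, that $\Pi_{TX}(f^*\dd)\Pi_{TX}$ is the Levi-Civita connection rather than merely \emph{a} metric connection. This is precisely the classical Gauss formula: for tangent vector fields $U,V$ on $X$, $(f^*\dd)_U V=\nabla^{TX}_U V+\II(U,V)$ with $\nabla^{TX}_U V$ tangent and torsion-free and metric, hence Levi-Civita by uniqueness; the normal component $\II(U,V)$ lands in $\nu$ and contributes only to the off-diagonal block, which Theorem \ref{tm} permits to be nonzero. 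Once this is in place the corollary is immediate. I would also remark that the hypothesis that $f$ lands in flat $\rz^4$ can be relaxed to $f$ landing in any translation manifold $Y^4$, which is how Theorem \ref{gencsd} is obtained; but for this corollary the flat case suffices and is cleanest.
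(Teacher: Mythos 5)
Your proposal is correct and follows essentially the same route as the paper's proof: reduce to the isometric case by conformal invariance, split $f^*T\rz^4=TX\oplus\nu$, identify the tangential block of the flat pulled-back connection with the Levi-Civita connection via the Gauss formula, and apply Theorem \ref{tm} with $L=\nu$. One small repair: your justification of the flatness of $\nabla^\nu$ via ``$R^{\nabla^\nu}=\Pi_\nu R^{f^*\dd}\Pi_\nu=0$'' is not a valid identity in general (the curvature of a compressed connection picks up second-fundamental-form corrections, as in the Ricci equation), but the conclusion holds anyway since $\nu$ is an oriented metric line bundle — equivalently, as the paper argues, the unit normal compatible with the orientations is a parallel section, so $\nabla^\nu$ is even trivial.
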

\begin{proof}
First, assume that $f$ is an isometric immersion. The tangent bundle of every orientable $3$-manifold is trivial \cite{sti}, so it makes sense to consider the Chern--Simons form for the Levi-Civita connection in some trivialization. The pull-back of $T\rz^4$ through the isometric immersion is the direct sum of $TX$ (since the map $f_*$ is injective on each tangent space) and of the normal line bundle $NX$. This last bundle is trivial together with its induced connection, since it admits a parallel section (the unit normal vector field compatible with the orientations on $TX$ and $T\rz^4$). Moreover, the connection induced on $TX$ by projecting the trivial connection on $f^*T\rz^4$ is the Levi-Civita connection. Hence for every trivialization of $TX$ we obtain by Theorem \ref{tm} that 
$\cs(\omega)\in \Lambda^3_\zz(X)$. By pairing with the fundamental homology class $[X]\in H_3(X,\zz)$ we get the conclusion in the case where $f$ is isometric.

In general, replace $g$ by the pull-back of the euclidean metric through $f$. Since $f$ is conformal, this metric is conformal to $g$, so the Chern--Simons invariant does not change.
\end{proof}

A notion more general than isometric immersions is that of \emph{equiaffine} immersions, see \cite{NS}. Recall that a connection on the tangent bundle of a manifold $X^n$ is called equiaffine (or unimodular) if it preserves a volume form. Equivalently, an equiaffine connection induces the trivial connection on the top form bundle $\Lambda^nX$. Yet another way of defining it would be to ask that the global holonomy group be a subgroup of $\mathrm{SL_n}$. An equiaffine immersion of a manifold endowed with a torsion-free equiaffine connection $\nabla^{TX}$ is by definition an immersion $f:X\to \rz^m$ together with a complement $E'$ of $TX$ inside $f^*T\rz^m$, such that for the projector $\Pi_{TX}$ defined by $E'$, the connection on $TX$ induced from $f^*\nabla^{^{\rz^4}}$ is precisely $\nabla$:
\[\nabla=\Pi_{TX}f^*\nabla^{\rz^4}\Pi_{TX}.
\]
A generalization of the result of Chern and Simons was recently found for equiaffine immersions:

\begin{theorem}[\v{C}ap-Flood-Mettler \cite{cfm}]\label{corc}
Let $X^3$ be a compact oriented $3$-fold with a torsion-free connection $\nabla$ on $TX$ which preserves a volume form. If $(X,\nabla)$ admits an equiaffine immersion in $\rz^4$ then 
\[\int_X\cs(\nabla)\in\zz.\]
\end{theorem}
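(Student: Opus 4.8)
The plan is to deduce Theorem \ref{corc} directly from Theorem \ref{tm}, mimicking the proof of the Chern--Simons corollary but replacing the isometric immersion with the equiaffine one. First I would consider the equiaffine immersion $f:X^3\to\rz^4$ together with the transversal complement $E'$ of $TX$ inside $f^*T\rz^4$ provided by the definition. Since $f$ is an immersion, $f_*$ identifies $TX$ with a subbundle of $f^*T\rz^4$, and $E'$ is a line bundle complement, so $f^*T\rz^4= TX\oplus E'$. The bundle $f^*T\rz^4$ is trivial, and its induced connection $f^*\nabla^{\rz^4}$ is the trivial (flat, parallel) connection, since $\nabla^{\rz^4}$ is the standard flat connection on $\rz^4$. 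In particular $TX$ is stably trivial, so the Chern--Simons invariant $\CS(\nabla)$ is defined.

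Next I would check the two hypotheses of Theorem \ref{tm}, with $E:=TX$, the line bundle being $L:=E'$, and the trivial connection on $E\oplus L= f^*T\rz^4$ being $f^*\nabla^{\rz^4}$. The first condition, $\nabla = \Pi_{TX} f^*\nabla^{\rz^4}\Pi_{TX}$, is exactly the defining property of an equiaffine immersion. For the second condition I need the connection $\nabla^L := \Pi_L f^*\nabla^{\rz^4}\Pi_L$ induced on the transversal line bundle $E'$ to be flat. This is where the equiaffine hypothesis (that $\nabla$ preserves a volume form on $X$) enters: the trivial connection on $\Lambda^4(f^*T\rz^4)\cong\Lambda^4\rz^4$ is flat, and under the splitting $f^*T\rz^4 = TX\oplus L$ one has $\Lambda^4(f^*T\rz^4)\cong \Lambda^3(TX)\otimes L$ as bundles with connection, where the connection on $\Lambda^3 TX$ induced by $\nabla$ is trivial precisely because $\nabla$ is equiaffine. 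Hence the induced connection on $L$ is trivial, in particular flat. (Even without invoking the volume form, a connection on a line bundle over any manifold is automatically flat if one only wants to apply Lemma \ref{lvan}(2), but here we in fact get triviality; either way the hypothesis of Theorem \ref{tm} is met — a flat line-bundle connection suffices.)

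Having verified both hypotheses, Theorem \ref{tm} yields $\CS(\nabla)=0$, i.e.\ for every trivialization of $TX$ the form $\cs(\nabla)$ lies in $\Lambda^3_\zz(X)$. Since $X$ is compact and oriented, pairing with the fundamental class $[X]\in H_3(X,\zz)$ gives $\int_X\cs(\nabla)\in\zz$, which is the assertion. The only subtlety — and the step I expect to require the most care — is the identification of the induced connection on the transversal line bundle $L$ as flat; this is precisely the point where the torsion-freeness and the volume-preserving property are used, and it is the analogue of the observation, in the Chern--Simons corollary, that the normal bundle of an isometric hypersurface immersion in $\rz^4$ carries a parallel unit section. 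Everything else is a direct translation of the earlier argument.
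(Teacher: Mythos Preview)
Your argument is correct and is essentially the paper's own: the paper deduces Theorem~\ref{corc} from the more general Theorem~\ref{tma}, whose proof is precisely the volume-form computation you outline (a $\nabla$-parallel volume form on $X$, combined with the parallel top form on $f^*T\rz^4$, forces the induced connection on the transversal line $L$ to be flat, after which Theorem~\ref{tm} applies). Two small remarks. First, your parenthetical claim that ``a connection on a line bundle over any manifold is automatically flat'' is false---the curvature is $\dd\omega_2$, which need not vanish---and Lemma~\ref{lvan}(2) does require flatness, so the equiaffine hypothesis is essential, not optional. Second, the identification $\Lambda^4(f^*T\rz^4)\cong \Lambda^3(TX)\otimes L$ \emph{as bundles with connection} is correct but merits a word, since $f^*\nabla^{\rz^4}$ is not block-diagonal for the splitting $TX\oplus L$: the point is that the induced connection on the determinant line only sees the trace of the connection $1$-form, to which the off-diagonal blocks do not contribute.
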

The proof in \cite{cfm} is based on a notion of ``flat embeddings'' of principal bundles with structure group $G$ inside some larger Lie group $\tilde{G}$. Theorem \ref{corc} is in fact a particular case of a more general vanishing result:

\begin{theorem}\label{tma}
Let $X^n$ be a smooth manifold, and $\nabla^X$ a connection on $TX$. Assume that:
\begin{enumerate}
\item the local holonomy of $\nabla^X$ is a subgroup of $\SL_n(\rz)$;
\item there exists an immersion $f:X\to Y$ in a manifold $Y^{n+1}$ and a trivial connection $\nabla^Y$ on $TY$;
\item there exists a rank-$1$ complement $E$ of $TX$ inside $f^*TY$ such that 
\[\nabla^X=\Pi_{TX}f^*(\nabla^{Y})\Pi_{TX},
\]
where $\Pi_{TX}:f^*TY\to TX$ is the projection on the first factor in $f^*TY=TX\oplus E$.
\end{enumerate}
Then the Chern--Simons invariant $\CS(\nabla^X)$ vanishes.
\end{theorem}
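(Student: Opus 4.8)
The plan is to reduce Theorem \ref{tma} to Theorem \ref{tm} by upgrading the flat connection $\nabla^Y$ on $TY$ to an honest \emph{trivial} connection and by producing a \emph{flat} connection on the complementary line bundle $E$. First I would observe that the hypothesis that $\nabla^Y$ is a trivial connection on $TY$ means $TY$ is parallelizable, and its pull-back $f^*TY=TX\oplus E$ is then a trivial bundle over $X$ carrying the trivial connection $f^*\nabla^Y$ (pull-back of a flat-with-trivial-holonomy connection along any map is again trivial, since a global parallel frame pulls back to a global parallel frame). So $TX$ is stably trivial, and condition (3) exhibits $\nabla^X$ as $\Pi_{TX}\nabla\Pi_{TX}$ for the trivial connection $\nabla:=f^*\nabla^Y$ on $E\oplus TX$. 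This already matches the shape of Theorem \ref{tm}, \emph{except} that there the complement is required to be a flat \emph{line} bundle with a specified flat connection compatible with the splitting.

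The second step is therefore to identify the connection induced on $E$. Let $\nabla^E:=\Pi_E\nabla\Pi_E$ be the connection on the rank-$1$ bundle $E$ obtained by projecting the trivial connection $\nabla=f^*\nabla^Y$; this is automatically a well-defined connection on $E$. The key point to check is that $\nabla^E$ is \emph{flat}. Here I would use hypothesis (1): the local holonomy of $\nabla^X$ lies in $\SL_n(\rz)$, equivalently $\nabla^X$ induces the trivial connection on $\Lambda^n TX$. Now $\Lambda^{n+1}(f^*TY)=\Lambda^n TX\otimes E$ canonically (using the splitting $f^*TY=TX\oplus E$), and the trivial connection $\nabla$ on $f^*TY$ induces the trivial connection on $\Lambda^{n+1}(f^*TY)$, which under this isomorphism is the tensor-product connection $\nabla^{\Lambda^n TX}\otimes 1 + 1\otimes\nabla^E$. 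Since $\nabla$ is trivial on the top exterior power and $\nabla^X$ (hence $\nabla^{\Lambda^n TX}$) is trivial on $\Lambda^n TX$, it follows that $\nabla^E$ is flat on the line bundle $E$ (its curvature, a $2$-form, equals minus the curvature of $\nabla^{\Lambda^n TX}$, which vanishes). One subtlety: "local holonomy in $\SL_n$" gives triviality of $\nabla^{\Lambda^n TX}$ only locally, i.e. flatness; but flatness of the line bundle $E$ is exactly what Theorem \ref{tm} needs, so this is enough.

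The final step is to invoke Theorem \ref{tm} directly: we have a stably trivial bundle $E^{\mathrm{vb}}:=TX$ with connection $\nabla^X$, a line bundle $L:=E$ with flat connection $\nabla^L:=\nabla^E$, and a trivial connection $\nabla=f^*\nabla^Y$ on $TX\oplus E$ satisfying $\nabla^X=\Pi_{TX}\nabla\Pi_{TX}$ by (3) and $\nabla^L=\Pi_E\nabla\Pi_E$ by construction. Theorem \ref{tm} then yields $\CS(\nabla^X)=0$.

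The main obstacle I anticipate is the flatness of $\nabla^E$: one must be careful that $\Pi_E\nabla\Pi_E$ really is the connection corresponding to $\nabla^X$ under the top-exterior-power identification, and that the curvature bookkeeping $R^{\Lambda^{n+1}}=R^{\Lambda^n TX}\oplus\text{(stuff)}+R^E$ is set up with the right signs; the off-diagonal curvature terms $R_{12}$, $R_{21}$ of $\nabla$ vanish trivially because $\nabla$ is the pull-back of a trivial connection, so they cause no trouble, but one should remark on this explicitly to legitimately apply Lemma \ref{main} (as already packaged inside Theorem \ref{tm}). A secondary point worth a sentence is that torsion-freeness of $\nabla^X$, which appears in Theorem \ref{corc}, is not actually needed for Theorem \ref{tma} --- only the unimodularity condition (1) enters --- so one should not inadvertently invoke it.
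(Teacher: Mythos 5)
Your proposal is correct and follows essentially the same route as the paper: both reduce to Theorem \ref{tm} by showing that the induced connection $\nabla^E=\Pi_E f^*(\nabla^Y)\Pi_E$ on the rank-one complement is flat, using the determinant line bundle $\Lambda^{n+1}f^*TY\cong\Lambda^nTX\otimes E$ and the unimodularity hypothesis. The only (harmless) difference is that the paper exhibits a parallel section $\mu^X\wedge e$ and deduces $\nabla^E e=0$, whereas you argue via additivity of curvatures on the top exterior power; your curvature formulation even sidesteps the local-versus-global holonomy point more cleanly.
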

\begin{proof}
The connection induced by $\nabla^X$ on the form bundle $\Lambda^n X$ has holonomy in the group $\{1\}$, hence it is flat and the line bundle $\Lambda^n X$ is trivial. 
Let thus $\mu^X$ be a parallel volume form on $X$ for $\nabla^X$, and choose a nonzero section $e$ in $E$ such that $\mu^X\wedge e$ is parallel for the trivial connection $f^*(\nabla^{Y})$ on $\Lambda^{n+1}f^*TY$. (The section $v_{n+1}$ is unique up to a locally constant multiplicative factor.) Since $\nabla^X=\Pi_{TX}f^*(\nabla^{Y})\Pi_{TX}$, it follows that $\Pi_{E}f^*\nabla^{Y}e=0$,
implying that the induced connection on the line bundle $E$ 
\[\nabla^E=\Pi_{E}f^*(\nabla^{Y})\Pi_{E}\] 
is flat. The conclusion follows now from Theorem \ref{tm}.
\end{proof}
Theorem \ref{corc} follows from Theorem \ref{tma} by taking $X$ to be compact, $3$-dimensional and oriented (so in particular parallelizable), $Y=\rz^4$ equipped with is standard flat connection, and $E$ the normal line bundle. Since in Theorem \ref{corc} the connection $\nabla^X$ is assumed to preserve a global volume form, the \emph{global} holonomy group of $\nabla^X$ must be a subgroup of $\SL_3(\rz)$. On an oriented $3$-manifold $X$, a $3$-form belongs to
$\Lambda^3_\zz(X)$ if and only if its integral on $X$ belongs to $\zz$.

Note that the method of proof in \cite{cfm} relies on mapping the principal frame bundle of $X$ into a principal bundle with total space a larger Lie group; this method does not generalise in an obvious way to the setting of Theorem \ref{tma}.

\section{Applications to conformal immersions in translation manifolds}\label{aci}

We conclude this note with an obstruction to conformal immersions in translation manifolds.
\subsection{Translation manifolds}
By definition, a \emph{translation manifold structure} on a topological manifold $Y$ denotes an atlas 
$\mathcal T$ whose transition maps consist of Euclidean translations. Since translations are oriented isometries of $\rz^n$, $Y$ is oriented and inherits from the Euclidean space a flat Riemannian metric $g^Y$. This metric has trivial holonomy on $Y$, a parallel global frame being defined at any $p\in Y$ by the preimage through any chart in $\mathcal T$ near $p$ of the standard frame on $\rz^n$. Since translations preserve the standard frame, this is well-defined independently of the chart in $\mathcal T$.
Conversely, a Riemannian manifold with trivial global holonomy must in particular be parallelizable (because one can fix a global parallel frame) and locally isometric to $\rz^n$ (since flat). Out of all local isometries with $\rz^n$, select those that map the fixed parallel frame onto the standard frame of $\rz^n$, obtaining in this way a translation manifold structure.

In real dimension $2$ there exists a related notion of \emph{translation surface}, which means a compact Riemann surface endowed with a holomorphic $1$-form $\alpha$. Outside the zero locus of $\alpha$, the translation surface has a natural translation atlas whose charts consist of local primitives of $\alpha$, and a flat metric defined by $\Re(\alpha\otimes \overline{\alpha})$, with conical singularities at the zero locus, with angle an integer multiple of $2\pi$. According to our definition, the complement of the zero locus of $\alpha$ in a translation surface is a translation manifold of dimension $2$. Note, however, that a general translation manifold cannot always be compactified by adding some conical points. 
Note also that translation manifolds are oriented and flat, but the converse does not hold. 
A free quotient of $\rz^n$ by a discrete subgroup of isometries is a translation manifold if and only if all group elements are translations. In general, a translation manifold is not required to be complete, so it need not immerse in some quotient of $\rz^n$.

\subsection{Obstructions to conformal immersions in translation manifolds}
\begin{proof}[Proof of Theorem \ref{gencsd}]
By Proposition \ref{confinv}, the Chern--Simons form in a fixed frame is conformally invariant up to exact forms on $X$, so we can assume that the metric on $X$ is chosen such that the immersion in $Y$ is isometric.
Having trivial holonomy means that we can find a global frame on $Y$ parallel for $\nabla^Y$; clearly in this case the Chern--Simons invariant of $\nabla^Y$ vanishes (Lemma \ref{lvan}). By Theorem \ref{tma}, $\CS(\nabla^X)\equiv 0\mod \Lambda^3_\zz(X)$ so by integration along $X$ we get an integer.
\end{proof}

In dimension $n=3$ we obtain:
\begin{corollary}\label{gencs}
If an oriented Riemannian compact $3$-fold 
$(X,g)$ immerses in a $4$-dimensional translation manifold, then 
\[\int_X\CS(\nabla^g)\in\zz.\]
\end{corollary}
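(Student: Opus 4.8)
The plan is to obtain this as the special case $n=3$ of Theorem \ref{gencsd}, together with the fact that on an oriented closed $3$-manifold the integrality of a closed $3$-form is detected by a single number.

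First I would check that the hypotheses of Theorem \ref{gencsd} are met. By assumption there is an immersion $f:X^3\to Y^4$ of the oriented compact Riemannian manifold $(X,g)$ into a $4$-dimensional translation manifold. If this immersion is not conformal for $g$, I would replace $g$ by the induced metric $f^*g^Y$; by Proposition \ref{confinv} the Chern--Simons form in a fixed global frame of $TX$ (which exists, since $TX$ is trivial for an oriented $3$-manifold) changes only by an exact $3$-form, and $\int_X$ of an exact form on a closed manifold vanishes, so $\int_X\CS(\nabla^g)$ mod $\zz$ is unaffected. With $g$ replaced by the induced metric, $f$ is an isometric, hence conformal, immersion into $Y^4$, so Theorem \ref{gencsd} applies.

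Next, since $X$ is compact and oriented, it carries a fundamental class $[X]\in H_3(X,\zz)$, represented by a smooth singular $3$-cycle $C$. Applying Theorem \ref{gencsd} to $C$ gives $\int_C\CS(\nabla^X)\in\zz$, which is precisely $\int_X\CS(\nabla^g)\in\zz$. Here $\CS(\nabla^g)$ is the class in $\Lambda^3(X)/\Lambda^3_\zz(X)$ produced by the construction, so the assertion is read modulo the ambiguity $\Lambda^3_\zz(X)$; this is harmless, because on an oriented $3$-manifold a closed $3$-form lies in $\Lambda^3_\zz(X)$ exactly when its integral over $X$ is an integer, as recorded after Theorem \ref{tma}.

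The argument involves essentially no new work beyond Theorem \ref{gencsd}: the only points to keep straight are the conformal-versus-isometric reduction above, and the observation that the rank-$1$ normal complement needed to invoke Theorem \ref{tma} through Theorem \ref{gencsd} is automatic because $\dim Y=\dim X+1$ and $f$ is an immersion. So I do not anticipate any real obstacle.
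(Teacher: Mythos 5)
Your use of Theorem \ref{gencsd} applied to the fundamental class $[X]\in H_3(X,\zz)$, together with the observation that on a closed oriented $3$-manifold a closed $3$-form is integral exactly when its integral over $X$ lies in $\zz$, is precisely the intended content here: the corollary is nothing but the case $n=3$ of Theorem \ref{gencsd}. The problem is the reduction you insert at the start. You claim that for an \emph{arbitrary} immersion $f$ one may replace $g$ by $f^*g^Y$ and invoke Proposition \ref{confinv} to conclude that the Chern--Simons form changes only by an exact $3$-form. Proposition \ref{confinv} concerns conformal changes of the metric, and $f^*g^Y$ is conformal to $g$ precisely when $f$ is a conformal immersion of $(X,g)$ --- which is the hypothesis you are trying to do without. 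For a non-conformal immersion the two metrics are unrelated, and the statement you would then be proving is actually false: as noted in the Examples subsection and computed in Appendix \ref{Appendix2}, the lens space $L_{p;q_1,q_2}$ with its spherical metric admits (non-conformal) immersions in $\rz^4$, which is a translation manifold, and yet its Chern--Simons invariant equals $-1/p\notin\zz$.

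So the corollary must be read, as the surrounding text and the lens-space discussion make clear, with ``immerses'' meaning ``conformally immerses'' in the sense of Theorem \ref{gencsd}. With that reading your opening reduction is unnecessary (and should be deleted), and the remainder of your argument --- fundamental cycle, integrality criterion on a closed oriented $3$-fold, and the automatic rank-$1$ complement coming from $\dim Y=\dim X+1$ --- is correct and coincides with what the paper intends.
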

\subsection{Examples}
The Chern--Simons invariant of the real projective space $\rz P^3=\SO(3)$ equals $1/2+\zz$. More generally, the Riemannian Chern--Simons invariant of the lens space $L_{p;a,b}$ (see Appendix 
\ref{Appendix2}) with its spherical metric equals $-1/p$ (Proposition \ref{csls}).
Lens spaces are locally isometric to $\sz^3\hookrightarrow \rz^4$. They also admit immersions in $\rz^4$, which however do not respect the conformal class. 
Despite these facts, it follows from Corollary \ref{gencs} and the computation in Appendix \ref{Appendix2}
that lens spaces cannot be conformally immersed in any Riemannian $4$-manifold with trivial holonomy.

\appendix
\section{De Rham cohomology of the general linear group}\label{apa}

We relegated to this section the proof of the fact, crucial for the definition of Chern--Simons invariants, that the closed $3$-form $\frac{1}{48\pi^2}\tr(\mu^3)$, constructed from the Maurer--Cartan form on $\GL_n^+(\rz)$, is integral for every $n$. (This is standard for $n=3$.) We prove at the same time the analogous result for the form $\frac{1}{24\pi^2}\tr_\cz(\mu^3)$ on $\GLnC$.

\subsection{Retraction onto the maximal compact subgroups}
The general linear group $\GLnC$ retracts onto $\Un$ by the deformation
\begin{align*}
\Phi:[0,1]\times \GLnC\to \GLnC,&& \Phi_s(A)=(AA^*)^{-s/2}A.
\end{align*}
The restriction of $\Phi$ to $\GLnR^+$ defines a deformation retraction onto $\SO(n)$. Thus the inclusions $\Un\hookrightarrow \GLnC$ and $\SO(n)\hookrightarrow \GLnR^+$ induce isomorphisms in (co)homology.

\subsection{Cohomology classes constructed from the Maurer--Cartan form}
Let $G\subset \GLnR$ be a linear Lie group. The Maurer--Cartan $1$-form $\mu=g^{-1}\dd g \in\Lambda^1(G)\otimes \gl_n\rz$ is defined by $\mu_g(V)=g^{-1}V$ for $g\in G, V\in T_gG$. It satisfies the equation
\[\dd\mu=\mu^2
\]
which is equivalent to the vanishing of the curvature of the unique connection on the trivial $G$-bundle over a point. It follows that the form $\tr(\mu^3)$ is closed:
\[\dd\tr(\mu^3) = \tr\left(\dd(\mu^3)\right) = \tr\left(\dd(\mu)\mu^2)-\mu\dd(\mu)\mu+\mu^2\dd(\mu)\right) = 
3\tr\left(\dd(\mu)\mu^2)\right) = 3\tr(\mu^4) = 0.
\]
The last equality holds because on one hand, 
$\mu^4=\mu^3\mu=\mu\mu^3$, while on the other hand, from the trace property, $\tr(\mu^3\mu)=-\tr(\mu\mu^3)$.

The case where $G$ is a subgroup in $\GLnC$ is entirely similar, the only difference being that in that case, $\tr_\cz(\mu^3)$ is a complex-valued closed $3$-form on $G$.

Note that the Maurer--Cartan form $\mu$ on $G\subset \GLnR$ coincides with the pullback to $G$ of the Maurer--Cartan form of the group $\GLnR$ itself. A similar statement holds for complex linear groups.

\subsection{Right regular representation of the quaternion algebra}
The field $\kz$ of quaternions is isomorphic  to $\cz^2$ as complex vector spaces, by the identification $z_1+z_2j\leftrightarrow (z_1,z_2)$.
This isomorphism is an isometry when we endow $\kz$ with the Hermitian inner product
\[\langle q_1,q_2\rangle_\cz =\mathfrak{C}(q_1\overline{q_2}),
\]
where $\mathfrak{C}(z_1+z_2j):=z_1$ denotes the complex part of the quaternion $z_1+z_2j$.
$\kz$ is also isometric to $\rz^4$ as real vector spaces 
by the identification $(a,b,c,d)\leftrightarrow a+bi+cj+dk$, with respect to the scalar product
$\langle q_1,q_2\rangle =\Re(q_1\overline{q_2})$.

Define a $\rz$-algebra morphism
\begin{align*}
R:\kz\to \End_\kz(\kz),&&R(q)v:=v\overline{q}.
\end{align*}
By composing $R$ with the chain of inclusion maps $\End_\kz(\kz)\hookrightarrow \End_\cz(\kz)\hookrightarrow \End_\rz(\kz)$, we get $\rz$-algebra maps $R_\cz:\kz\to \End_\cz(\kz)$, respectively
$R_\rz:\kz\to \End_\rz(\kz)$.
Since $R(q)$ is an isometry for $|q|=1$, by restricting to the Lie group of unit-length quaternions
we get Lie group morphisms
\begin{align*}
R_\cz:\sz^3\to \U(2),&&
R_\rz:\sz^3\to\mathrm{O}(4),&&R_\cz(q)=R_\rz(q)=(v\mapsto v\overline{q}).
\end{align*}
Furthermore, the group morphism $\det\circ R_\cz:\sz^3\to \sz^1$ factors through the abelianization of $\sz^3$, which is $\{\pm 1\}$. Since $\sz^3$ is connected, it follows that $R_\cz$, respectively 
$R_\rz$, take values in $\SU(2)$, respectively $\SO(4)$. In fact $R_\cz:\sz^3\to \SU(2)$ turns out to be an isomorphism, while $R_\rz:\sz^3\hookrightarrow\SO(4)$ is an embedding.

\subsection{Homology of $\U(n)$ and $\SO(n)$}
Let $e_1$ denote the first element in the standard basis of $\rz^n$. For every $n\geq 2$ consider the principal fibre bundle with structure group 
$\SO(n-1)=\Stab(e_1)$
\begin{align}\label{fson}
\xymatrix{
\SO(n)\ar[d]^\pi \ar@{-}[r]& \SO(n-1)\\
\sz^{n-1}&
}
\end{align}
where $\pi(A)=Ae_1$. When $n=4$, the embedding
\begin{align}\label{sectr}
s:\sz^3\to \SO(4),&&s(q)=R_\rz(q^{-1})
\end{align}
is a global section in the fibration $\pi$, giving rise to a product decomposition of manifolds
\[\SO(4)=s(\sz^3)\cdot\SO(3).
\]
By the K\"unneth formula, $H_3(\SO(4),\zz)$ is hence generated by the two $3$-cycles of the subgroups $s(\sz^3)$ and 
$\SO(3)$, the latter being embedded in $\SO(4)$ in the lower right corner. 

\begin{lemma}\label{isu}
For $n\geq 5$, the map 
$H_3(\SO(n-1),\zz)\to H_3(\SO(n),\zz)$ induced by inclusion is surjective. 
\end{lemma}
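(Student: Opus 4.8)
The plan is to read the result off the sphere bundle \eqref{fson}, that is $\SO(n-1)\hookrightarrow\SO(n)\xrightarrow{\pi}\sz^{n-1}$, using only the fact that for $n\geq 5$ the base $\sz^{n-1}$ is highly connected: it is $(n-2)$-connected, and $3\leq n-2$.

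First I would examine the homotopy long exact sequence of this fibration. Since $\pi_j(\sz^{n-1})=0$ for $j\leq n-2$, and since $\pi$ induces isomorphisms $\pi_*(\SO(n),\SO(n-1))\cong\pi_*(\sz^{n-1})$, the pair $(\SO(n),\SO(n-1))$ is $(n-2)$-connected. Equivalently, up to homotopy $\SO(n)$ is built from $\SO(n-1)$ by attaching cells of dimension $\geq n-1$; hence $H_j(\SO(n),\SO(n-1);\zz)=0$ for all $j\leq n-2$, in particular for $j=3$ once $n\geq 5$. The homology long exact sequence of the pair then gives, in the relevant range,
\[
H_3(\SO(n-1);\zz)\longrightarrow H_3(\SO(n);\zz)\longrightarrow H_3(\SO(n),\SO(n-1);\zz)=0,
\]
so the inclusion-induced map on $H_3$ is onto, as claimed.

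An argument staying inside homology works just as well: the Serre spectral sequence of \eqref{fson} is simple, since $\sz^{n-1}$ is simply connected, so $E^2_{p,q}=H_p(\sz^{n-1};H_q(\SO(n-1);\zz))$, and for $n\geq 5$ the columns $p=1,2,3$ vanish identically. The only entry of total degree $3$ is then $E^2_{0,3}=H_3(\SO(n-1);\zz)$; no differential can leave the corner $(0,3)$, so $E^\infty_{0,3}$ is a quotient of it, and because all the other graded pieces $E^\infty_{p,3-p}$ with $p\geq 1$ of the filtration on $H_3(\SO(n);\zz)$ vanish, one gets $H_3(\SO(n);\zz)=E^\infty_{0,3}$, the quotient map being exactly the edge homomorphism induced by $\SO(n-1)\hookrightarrow\SO(n)$. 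The proof has essentially no hard step; the only point deserving care is the bookkeeping showing that incoming differentials hitting $E_{0,3}$ cannot spoil surjectivity — they only threaten injectivity, which in fact already fails for $n=5$, as the inclusion $\SO(4)\subset\SO(5)$ shows.
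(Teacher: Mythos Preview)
Your argument is correct. Both routes you sketch---the long exact sequence of the pair together with the vanishing of relative homology coming from $(n-2)$-connectedness, and the Serre spectral sequence with its vanishing columns $p=1,2,3$---establish surjectivity cleanly; your observation that incoming differentials to $E_{0,3}$ can only kill injectivity (which indeed fails at $n=5$) is exactly the right bookkeeping.

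The paper, however, proceeds more geometrically and avoids both Hurewicz and spectral sequences. It represents an arbitrary class in $H_3(\SO(n);\zz)$ by a smooth singular $3$-cycle $C$, observes that for $n\geq 5$ the image $\pi(C)\subset\sz^{n-1}$ has dimension at most $3<n-1$ and hence misses a point $p$, and then lifts a deformation retraction of $\sz^{n-1}\setminus\{p\}$ onto $\{-p\}$ horizontally through a connection in the bundle \eqref{fson}, thereby homotoping $C$ into the single fibre $\pi^{-1}(-p)\cong\SO(n-1)$. This is of course the same connectivity phenomenon executed by hand, in keeping with the paper's stated aim of remaining self-contained and free of heavier topological machinery. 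Your approach is more systematic and immediately recognisable as a special case of homological stability for the orthogonal groups; the paper's is more elementary but ad hoc.
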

\begin{proof}
Represent any $3$-homology class on $\SO(n)$ by a smooth singular $3$-cycle $C$. By dimensional reasons, 
$\pi\circ C$ must avoid at least a point $p\in\sz^{n-1}$. Let $(\Phi_t)_{0\leq t\leq 1}$ be a deformation retract of $\sz\setminus\{p\}$ onto ${-p}$. Using a connection in the fibration \eqref{fson}, we lift the retraction horizontally, to define a retraction by deformation of $\pi^{-1}(\sz\setminus\{p\})$ to the fiber over $-p$, hence to $\SO(n-1)$. It follows that the cycle $C$ is cohomologous to a cycle in $\SO(n-1)\subset \SO(n)$, as claimed.
\end{proof}

The case of $\GLnC$ is similar but simpler, using the principal fibration
\begin{align}\label{fun}
\xymatrix{
\U(n)\ar[d]^\pi \ar@{-}[r]& \U(n-1)\\
\sz^{2n-1}&
}
\end{align}
For $n=2$ this fibration is trivial because it admits the section $s_\cz=R_\cz\circ \mathrm{inv}$ as in \eqref{sectr}, inducing a product decomposition
\[\U(2)=s_\cz(\sz^3)\cdot\U(1),
\]
hence $H_3(\U(2),\zz)$ is infinite cyclic, with generator the cycle $s(\sz^3)$. For larger $n$, the lacunarity of the Leray-Serre homology spectral sequence of the fibration \eqref{fun} shows that the inclusion 
$\U(n-1)\hookrightarrow \U(n)$ induces an isomorphism in $H_3$, so for every $n$ the $3$-homology of $\U(n)$ is generated by the cycle $\sz^3$.

\subsection{Integrality}
\begin{lemma}
For every $N$, the form $\frac{1}{48\pi^2}\tr(\mu^3)$ belongs to $\Lambda^3_\zz(\GL^+_n(\rz))$.
\end{lemma}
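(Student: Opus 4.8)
The plan is to reduce the integrality statement to a computation on the compact subgroup $\SO(n)$, using the retraction established earlier in this appendix. First I would recall that since $\SO(n)\hookrightarrow \GL_n^+(\rz)$ is a deformation retraction, it induces an isomorphism $H_3(\SO(n),\zz)\xrightarrow{\sim} H_3(\GL_n^+(\rz),\zz)$, and the Maurer--Cartan form $\mu$ pulls back to the Maurer--Cartan form of $\SO(n)$ (as noted in the subsection on cohomology classes constructed from $\mu$). Hence it suffices to show $\int_C \frac{1}{48\pi^2}\tr(\mu^3)\in\zz$ for every smooth singular $3$-cycle $C$ in $\SO(n)$; and since the class of $\tr(\mu^3)$ is closed, it is enough to check this on a set of generators of $H_3(\SO(n),\zz)$.

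Next I would run the induction on $n$ provided by the homology lemmas just proved. For $n\geq 5$, Lemma \ref{isu} says $H_3(\SO(n-1),\zz)\to H_3(\SO(n),\zz)$ is surjective, so — again using that $\mu$ restricts correctly to the subgroup — integrality on $\SO(n)$ follows from integrality on $\SO(n-1)$. This reduces everything to the base cases $n=3$ and $n=4$. For $n=3$ one uses that $H_3(\SO(3),\zz)\cong\zz$ is generated by the fundamental class of $\SO(3)$ itself, and the stated normalization $\int_{\SO(3)}\frac{1}{48\pi^2}\tr(\mu^3)=-1$ (the computation the appendix promises to carry out), which is an integer. For $n=4$, the K\"unneth decomposition $\SO(4)=s(\sz^3)\cdot\SO(3)$ shows $H_3(\SO(4),\zz)$ is generated by the two $3$-cycles $s(\sz^3)$ and $\SO(3)\subset\SO(4)$; on the $\SO(3)$-factor we are back to the $n=3$ case, while on $s(\sz^3)$ we must evaluate $\frac{1}{48\pi^2}\tr(\mu^3)$ over the image of the embedding $s:\sz^3\to\SO(4)$, $s(q)=R_\rz(q^{-1})$. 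Pulling $\mu$ back along $s$ gives (up to sign) the Maurer--Cartan form of $\sz^3\cong\SU(2)$, so this integral is, up to a universal constant, $\int_{\SU(2)}\tr(\mu^3)$, which one computes explicitly (it is the standard generator of $H^3(\SU(2),\zz)$ paired with the fundamental class, an integer with the chosen normalization).

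The main obstacle I expect is the base-case bookkeeping: making sure the normalization constant $\frac{1}{48\pi^2}$ really produces integers and not just rational numbers on \emph{both} generators in the $n=4$ case simultaneously — i.e. that the same constant that yields $-1$ on $\SO(3)$ also yields an integer on $s(\sz^3)\subset\SO(4)$. This comes down to comparing the explicit invariant volume integral of $\tr(\mu^3)$ over $\SO(3)$ with that over $\SU(2)$, keeping careful track of the factor coming from the double cover $\SU(2)\to\SO(3)$. Once the $\SU(2)$ computation $\int_{\SU(2)}\frac{1}{24\pi^2}\tr(\mu^3)=\pm 1$ is in hand (this is exactly the content flagged for $\GL_n(\cz)$ via Lemma \ref{lemintc}, which I may invoke), the relation $\tr(\mu^3)|_{\SO(3)}$ versus its lift, together with the degree-$2$ cover, gives the factor $48$ versus $24$, closing the argument. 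Everything else is formal once the retraction and the two homology lemmas are granted.
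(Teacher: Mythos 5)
Your proposal is correct and follows essentially the same route as the paper: retract onto $\SO(n)$, use Lemma \ref{isu} to reduce to $n\le 4$, and evaluate $\frac{1}{48\pi^2}\tr(\mu^3)$ on the two generating cycles $\SO(3)$ and $s(\sz^3)$. The only difference is that you propose to obtain the numerical values by comparison with the complex case (Lemma \ref{lemintc}) and the double cover $\sz^3\to\SO(3)$, whereas the paper evaluates $\tr(\mu^3)$ directly on the frame $(I,J,K)$ via the explicit matrices of $\ad$ and the multiplicativity of $R_\rz$ --- both routes reduce to the same Lie-algebra-level computation, which your sketch defers (the normalization on $\SO(3)$ cannot simply be ``cited,'' since this lemma is where it is established) but correctly identifies as the remaining bookkeeping.
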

\begin{proof}

Since $\SO(n)$ is a deformation retract of $\GL^+_n(\rz)$, we have to check that the evaluation of $\frac{1}{48\pi^2}\tr(\mu^3)$ on every $3$-cycle with integer coefficients
in $\SO(n)$ is integral. This is trivially true (for dimensional reasons) when $n\leq 2$.

For every $n\geq 3$ we have the cycle consisting of (a triangulation of) the subgroup $\SO(3)\hookrightarrow \SO(n)$ embedded diagonally in the right lower corner. One checks immediately that
the restriction of $\mu$ to $\SO(3)$ is the Maurer--Cartan form of that subgroup, composed with the diagonal inclusion of spaces of matrices $\gl(3)\hookrightarrow \gl(n)$. This inclusion is moreover compatible with the trace maps. We now claim that
\begin{equation}\label{intso}
\tr(\mu^3)_{|\SO(3)}=-48\vol,
\end{equation}
where $\vol$ is the standard volume form on $\sz^3$.
To see this, consider the adjoint representation
\begin{align}\label{stso}
\Ad: \sz^3\to \GL(\kz), && \Ad_q(v) = qvq^{-1}.
\end{align}
It decomposes into the trivial representation on $\rz$ (which is the center of $\kz$) and a orthogonal representation on the span over $\rz$ of the imaginary quaternions 
$\{i,j,k\}$, identified with $\rz^3$, that we denote by $\Ad: \sz^3\to\SO(3)$. The group morphism $\Ad$ is surjective, a $2:1$ covering map with kernel $\{\pm 1\}$, inducing an isomorphism $\ad:\rz^3\to \so(3)$ at the level of Lie algebras. The group $\sz^3\subset \kz$ with its standard bi-invariant metric admits an orthonormal frame $(I,J,K)$ of left-invariant vector fields defined at $q\in \sz^3$ by $I_q=qi\in T_q \sz^3\subset \kz$, etc. In this basis, the map $\ad$ takes the form
\begin{align*}
\ad(I)=\begin{bmatrix} 0&0&0\\0&0&-2\\0&2&0
\end{bmatrix},&&
\ad(J)=\begin{bmatrix} 0&0&2\\0&0&0\\-2&0&0
\end{bmatrix},&&
\ad(K)=\begin{bmatrix} 0&-2&0\\2&0&0\\0&0&0
\end{bmatrix}.
\end{align*}
From this, we compute
\begin{align*}
\tr(\ad(I)\ad(J)\ad(K))=-8,&&\tr(\ad(I)\ad(K)\ad(J))=8.
\end{align*}
Note that $\tr(\mu^3)$ is left-invariant, so it is a multiple (to be determined) of the standard volume form induced from $\sz^3$ by the $2:1$ covering map \eqref{stso}. Moreover, from the above identities, $\Ad^*\tr(\mu^3)(I,J,K)=48$.  Together, these two facts end the proof of \eqref{intso}.

When $n=3$, the cycle $\SO(3)$ clearly generates $H_3(\SO(3),\zz)$.

For $n=4$, we have seen that
$\SO(4)$ is diffeomorphic to the Cartesian product of the submanifolds $\SO(3)$ and $s(\sz^3)$, so $H_3(\SO(4),\zz)$ is free abelian with \emph{two} generators, the extra one being the subgroup $s(\sz^3)$, or equivalently\footnote{The $3$-cycles $s(\sz^3)$ and 
$R_\rz(\sz^3)$ in $\SO(4)$ differ by the orientation-changing involution $q\mapsto q^{-1}$ of $\sz^3$.}  $R_\rz(\sz^3)$. 
The form $R_\rz^*\tr(\mu^3)$ is left-invariant, hence a constant multiple of the standard volume form. Write 
$R_\rz^*\mu\in\Lambda^1(\sz^3,\gl_4(\rz))$ in the standard basis of $\kz$ as
$R_\rz^*\mu(V)=R_\rz(V)$ for every $V\in T\sz^3$. Since $R_\rz$ is multiplicative, for every permutation $\sigma\in\Sigma_3$, 
\[R_\rz(\sigma(I))R_\rz(\sigma(J))R_\rz(\sigma(K))=R_\rz(\sigma(I)\sigma(J)\sigma(K))=R_\rz(-\mathrm{sign}(\sigma))=-\mathrm{sign}(\sigma)\mathrm{Id}_4,\] 
where 
$\mathrm{sign}(\sigma)$ is the signature of the permutation $\sigma$,
therefore $\tr\left((R_\rz^*\mu^3)(I,J,K)\right)=-24$, showing that 
\begin{equation}\label{ints}
\tr(\mu^3)_{|R_\rz(\sz^3)}=-24\vol.
\end{equation}

Finally, for $n\geq 5$, by Lemma \ref{isu} the cycles $\SO(3)$ and $s(\sz^3)$ still generate\footnote{Albeit not freely, but this is irrelevant for the sake of our argument.} $H_3(\SO(n),\zz)$. Since $\vol(\sz^3)=2\pi^2$ and $\vol(\SO(3))=\pi^2$, it follows from \eqref{intso} and \eqref{ints} that
$\frac{1}{48\pi^2}\tr(\mu^3)$ evaluated on these cycles equals $- 1$, respectively $1$.
\end{proof}

\begin{lemma}\label{lemintc}
For every $n$, the form $\frac{1}{24\pi^2}\tr_\cz(\mu^3)$ belongs to $\Lambda^3_\zz(\GLnC)$.
\end{lemma}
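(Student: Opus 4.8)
The plan is to mirror the structure of the proof of the real integrality lemma, using the retraction of $\GLnC$ onto $\Un$ and the fact, established above, that $H_3(\Un,\zz)$ is infinite cyclic for every $n\geq 2$, generated by the cycle $\sz^3$ embedded via $s_\cz=R_\cz\circ\mathrm{inv}:\sz^3\to\SU(2)\subset\U(2)\hookrightarrow\U(n)$. Since $\frac{1}{24\pi^2}\tr_\cz(\mu^3)$ is closed (as shown in the general discussion of Maurer--Cartan forms) and the pullback of $\mu$ to a subgroup is the Maurer--Cartan form of that subgroup, it suffices to evaluate this form on the single generating cycle $s_\cz(\sz^3)$ and check that the result is an integer.

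First I would reduce to $n=2$: for $n\geq 3$ the inclusion $\U(2)\hookrightarrow\U(n)$ induces an isomorphism on $H_3$ (lacunarity of the spectral sequence, already invoked above), and $\mu_{|\U(2)}$ is the Maurer--Cartan form of $\U(2)$ composed with the trace-compatible inclusion $\gl_2(\cz)\hookrightarrow\gl_n(\cz)$, so the evaluation is independent of $n$. Then I would compute $\tr_\cz(\mu^3)$ restricted to $s_\cz(\sz^3)\cong\SU(2)$. Since $R_\cz:\sz^3\to\SU(2)$ is a group isomorphism, $s_\cz^*\mu$ is (up to the sign coming from $\mathrm{inv}$) the Maurer--Cartan form of $\SU(2)$ in its defining $2$-dimensional representation; writing $R_\cz^*\mu(V)=R_\cz(V)$ on the left-invariant frame $(I,J,K)$ and using multiplicativity exactly as in the $\SO(4)$ computation, one gets for each permutation $\sigma\in\Sigma_3$ that $R_\cz(\sigma(I))R_\cz(\sigma(J))R_\cz(\sigma(K))=R_\cz(-\mathrm{sign}(\sigma))=-\mathrm{sign}(\sigma)\mathrm{Id}_2$, whence $\tr_\cz\bigl((R_\cz^*\mu^3)(I,J,K)\bigr)=\sum_{\sigma}\mathrm{sign}(\sigma)\cdot\bigl(-\mathrm{sign}(\sigma)\bigr)\cdot 2=-12$. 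The sign flip from $\mathrm{inv}$ changes the orientation of the cycle but not the conclusion. Hence $\tr_\cz(\mu^3)_{|s_\cz(\sz^3)}=\pm 12\,\vol$, and since $\vol(\sz^3)=2\pi^2$, the form $\frac{1}{24\pi^2}\tr_\cz(\mu^3)$ evaluates to $\pm 1$ on the generator, which is an integer; this proves integrality.

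The step I expect to require the most care is the bookkeeping of signs and orientations: tracking the effect of composing with $\mathrm{inv}$ (which reverses the orientation of $\sz^3$), and making sure the normalization constant $\frac{1}{24\pi^2}$ is the one that yields an integer (indeed $\pm1$) on the generator rather than a half-integer. The underlying reason the constant is $\frac{1}{24\pi^2}$ here, versus $\frac{1}{48\pi^2}$ in the real case, is precisely the factor-of-two discrepancy between $\tr$ and $2\Re\tr_\cz$ noted earlier, combined with the fact that the generating cycle is now $\sz^3$ rather than $\SO(3)$ (whose volume is half that of $\sz^3$); I would make this consistency check explicit as the final line of the proof.
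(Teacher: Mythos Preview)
Your proposal is correct and follows essentially the same route as the paper: reduce via the retraction onto $\U(n)$ to the single generating $3$-cycle $\sz^3\to\SU(2)\hookrightarrow\GLnC$, then compute $\tr_\cz(\mu^3)$ on the left-invariant frame $(I,J,K)$ using the multiplicativity of $R_\cz$ to get $\pm 12$, and conclude from $\vol(\sz^3)=2\pi^2$ that the normalized form integrates to $\pm 1$. Your treatment is in fact a bit more careful than the paper's about the sign bookkeeping around $\mathrm{inv}$; the final consistency check with the real constant is a nice addition but not needed for the argument.
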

\begin{proof}
This is similar to the real case using the same arguments as above, only simpler since for every $n$ the homology space $H_3(\GLnC)$ is free of rank $1$, generated by the cycle $\sz^3\to\SU(2)\hookrightarrow \GLnC$. The matrix-valued form $\mu^3$ evaluated on the orthonormal frame $\{I,J,K\}$ equals $3!$ times $R_\cz(I)R_\cz(J)R_\cz(K)=R_\cz(IJK)=R_\cz(-1)=-\mathrm{Id}_2$, so 
$\tr_\cz(\mu^3)(I,J,K)=12$. Since $\vol(\sz^3)=2\pi^2$, it follows that
\[\int_{\sz^3} \tr_\cz(\mu^3) = -24\pi^2.\qedhere
\]
\end{proof}

\section{Riemannian Chern--Simons invariant of lens spaces}\label{Appendix2}

Chern--Simons invariants of flat $\SU(2)$ connections on $3$-dimensional lens spaces have been computed by Kirk-Klassen \cite{kirkk}. Here we compute the real Chern--Simons invariant of the Levi-Civita connection.

\subsection{The sphere $\sz^3$}
The Chern--Simons form of the Levi-Civita connection $\nabla$ on $\sz^3$ in the orthonormal frame 
$(I,J,K)$ was computed in the original paper of Chern and Simons \cite{cs}:
\[\cs(\nabla;(I,J,K))=-\frac{1}{2}\vol.
\] 
We refer e.g.\ to \cite{cotton} for the easy proof, which can also be derived from the results of the previous sections. As a corollary, $\CS(\nabla)=0\mod \zz$.
\subsection{Lens spaces}
The \emph{lens space} $L_{p;q_1,q_2}$ is defined as the quotient of $\sz^3\subset \cz^2$ by the cyclic group $\Gamma$ of order $p$ generated by the isometry
\[(z_1,z_2)\mapsto (e^{2\pi iq_1/p}z_1,e^{2\pi i q_2/p}z_2)
\]
for some integers $p,q_1,q_2$. If $q_1,q_2$ are both relatively prime to $p$, the action is free, so the lens space $ L_{p;q_1,q_2}$ is an oriented compact Riemannian $3$-fold with constant sectional curvature $1$.
Denote by $\pi:\sz^3\to L_{p;q_1,q_2}$ the natural $p:1$ covering map.
\begin{proposition}\label{csls}
The Chern--Simons invariant of the Levi-Civita connection for the standard spherical metric on $L_{p;q_1,q_2}$ equals $-\frac{1}{p}\mod \zz$.
\end{proposition}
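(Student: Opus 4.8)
The natural strategy is to compare the Chern--Simons form on $L_{p;q_1,q_2}$ with the one on $\sz^3$ via the covering map $\pi$. Since $\pi$ is a local isometry, the Levi--Civita connection and hence the Chern--Simons form on the quotient pull back to the corresponding objects on $\sz^3$; explicitly, $\pi^*\cs(\nabla^L)=\cs(\nabla^{\sz^3})=-\tfrac12\vol$ in a suitable local orthonormal frame. The issue, of course, is that the frame $(I,J,K)$ on $\sz^3$ need not descend to $L_{p;q_1,q_2}$ (the bundle $T L_{p;q_1,q_2}$ is trivial, being the tangent bundle of an orientable $3$-manifold, but a chosen trivialization upstairs need not be $\Gamma$-invariant), so the identity $\cs(\nabla^L)=-\tfrac12\vol$ does not literally make sense on the quotient. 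One must instead work with a genuine global frame on $L_{p;q_1,q_2}$, pull it back, and track by how much it differs from $(I,J,K)$.

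First I would fix a global orthonormal frame $\tilde s$ on $L_{p;q_1,q_2}$ and let $s=\pi^*\tilde s$ be the induced frame on $\sz^3$. Write $s=(I,J,K)\cdot a$ for some smooth map $a:\sz^3\to\SO(3)$; then by the transformation formula \eqref{csop},
\[
\cs(\nabla^{\sz^3};s)=\cs(\nabla^{\sz^3};(I,J,K))+\tfrac{1}{48\pi^2}\tr\!\big((a^{-1}\dd a)^3\big)-\tfrac{1}{16\pi^2}\dd\big(\dd a\, a^{-1}\omega\big),
\]
where $\omega$ is the connection form of $\nabla^{\sz^3}$ in $(I,J,K)$. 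Now $\cs(\nabla^{\sz^3};s)=\pi^*\cs(\nabla^L;\tilde s)$, so integrating over $\sz^3$ and using that $\pi$ is a $p:1$ covering gives
\[
p\int_{L_{p;q_1,q_2}}\cs(\nabla^L;\tilde s)=\int_{\sz^3}\pi^*\cs(\nabla^L;\tilde s)=-\tfrac12\vol(\sz^3)+\tfrac{1}{48\pi^2}\int_{\sz^3}\tr\!\big((a^{-1}\dd a)^3\big),
\]
the exact term integrating to zero. Since $\vol(\sz^3)=2\pi^2$, the first term contributes $-\pi^2$.

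The crux is therefore to evaluate $\tfrac{1}{48\pi^2}\int_{\sz^3}\tr((a^{-1}\dd a)^3)$, i.e.\ to evaluate the integral cohomology class $\tfrac{1}{48\pi^2}\tr(\mu^3)$ on $\SO(3)$ against the cycle $a_*[\sz^3]$. By the Appendix \ref{apa}, this class evaluates to $-1$ on the fundamental cycle of $\SO(3)\subset\GL_n^+(\rz)$, hence $\int_{\sz^3}\tfrac{1}{48\pi^2}\tr((a^{-1}\dd a)^3)=-\deg(a)$, where $\deg(a)\in\zz$ is the degree of $a:\sz^3\to\SO(3)$ (using $H_3(\SO(3),\zz)\cong\zz$). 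Putting this together,
\[
p\int_{L_{p;q_1,q_2}}\cs(\nabla^L;\tilde s)=-\pi^2-\deg(a),
\]
so $\int_{L_{p;q_1,q_2}}\cs(\nabla^L;\tilde s)\equiv -\tfrac1p\pmod{\tfrac1p\zz}$, and in particular modulo $\zz$ it is determined once we know $\deg(a)\bmod p$. The remaining task is to exhibit one convenient global frame $\tilde s$ on the lens space and compute the corresponding $\deg(a)$ modulo $p$; a natural choice is to use the left-invariant framing of $\sz^3\cong\SU(2)$ as $(I,J,K)$ and take $\tilde s$ coming from a framing adapted to the $\Gamma$-action, so that $a$ is (homotopic to) the map $\sz^3\to\sz^3/\Gamma\cong\sz^3$ composed with a standard generator, giving $\deg(a)\equiv 0\pmod p$ and hence the value $-\tfrac1p\bmod\zz$. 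I expect the main obstacle to be precisely this last bookkeeping step: choosing the frame on $L_{p;q_1,q_2}$ explicitly enough that the transition map $a$ can be identified up to homotopy, and confirming that its degree is divisible by $p$ (equivalently, that no extra integer contribution sneaks in from the choice of frame), consistently with the already-computed value $\CS(\nabla)=1/2+\zz$ in the case $p=2$, $\rz P^3=\SO(3)$.
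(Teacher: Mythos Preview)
Your overall strategy coincides with the paper's: pull back through the $p$-fold cover $\pi$, compare the pulled-back frame with $(I,J,K)$ via the transformation formula \eqref{csop}, and integrate. The divergence, and the genuine gap, is entirely in the final step, where you must control $\tfrac{1}{48\pi^2}\int_{\sz^3}\tr((a^{-1}\dd a)^3)$.

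You correctly identify this integral as $-\deg(a)$, and you correctly observe that only $\deg(a)\bmod p$ matters for the conclusion. But the justification you sketch does not work. The quotient $\sz^3/\Gamma$ is the lens space itself, not $\sz^3$, so there is no map ``$\sz^3\to\sz^3/\Gamma\cong\sz^3$'' for $a$ to be homotopic to. Nor does $a$ descend to the quotient: the frame $(I,J,K)$ is not $\Gamma$-invariant (that is precisely why a new frame is needed), so $a$ is only $\Gamma$-equivariant for a nontrivial action on the target. At this point the argument stalls: one still needs \emph{some} explicit frame on $L_{p;q_1,q_2}$ for which $\deg(a)$ can actually be read off.

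The paper supplies exactly this. The key observation is that the vector field $I$ (right multiplication by $i$ on $\sz^3\subset\kz$) \emph{is} $\Gamma$-invariant and therefore descends to the lens space; its orthogonal complement descends to a trivial oriented plane bundle, giving a global orthonormal frame $(I,U,V)$ on $L_{p;q_1,q_2}$. The transition matrix $a$ between $(\pi^*I,\pi^*U,\pi^*V)$ and $(I,J,K)$ then fixes the first basis vector, so $a$ takes values in $\SO(2)\subset\SO(3)$. Since $\SO(2)$ is one-dimensional, $a^*\tr(\mu^3)=\tr((a^{-1}\dd a)^3)$ vanishes \emph{pointwise}; in particular $\deg(a)=0$ without any homotopy bookkeeping. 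Both frames therefore give the same integral $-1$ over $\sz^3$, and dividing by $p$ finishes.

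A minor arithmetic slip, separate from the above: you evaluate the first term as $-\tfrac12\vol(\sz^3)=-\pi^2$ but then proceed as if it were $-1$. With the paper's normalization one has $\int_{\sz^3}\cs(\nabla;(I,J,K))=-1$ (equation \eqref{ecs}), so the identity you want is $p\int_{L}\cs=-1-\deg(a)$, from which $\int_L\cs\equiv -1/p\pmod{\tfrac1p\zz}$ once $\deg(a)$ is known.
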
 
\begin{proof}
The vector field $I$ defined by right multiplication with $i\in\kz$ is invariant by $\Gamma$, so its orthogonal complement, the span of $J$ and $K$, forms an invariant oriented real vector bundle of rank $2$ with inner product, which can therefore be naturally equipped with a complex line bundle structure. Being $\Gamma$-invariant, this bundle descends to a complex line bundle $E$ over $L_{p;q_1,q_2}$. Since 
$H^2(L_{p;q_1,q_2},\zz)=0$, the Chern class of $E$ vanishes so $E$ is trivial. A trivialization of $E$ by a global section $U$ of length $1$ defines a trivialization of the underlying real vector bundle by an orthonormal frame $(U,V)$, where $V$ is the rotation of $U$ by angle $\pi/2$. Together with $I$, these vector fields form an orthonormal frame on $L_{p;q_1,q_2}$. Let $a:\sz^3\to\SO(3)$ be the matrix relating the pull-back of the frame $(I,U,V)$ to $\sz^3$, denoted 
$(\pi^*I,\pi^*U,\pi^*V)$, and $(I,J,K)$. Since $I=\pi^*I$ is the first vector in both frames, the matrix $a$ relating them is block-diagonal, orthogonal, with $1$ in the upper left corner, hence a rotation in the bottom right corner. In other words, $a$ factors through $\SO(2)$, embedded in $\SO(3)$ as the stabilizer of $e_1$.

Use now formula \eqref{csop} to relate the Chern--Simons forms of the Levi-Civita connection on $\sz^3$ in the two trivializations $(\pi^*I,\pi^*U,\pi^*V)$ and $(I,J,K)$. 

Since the map $a$ factors through the $1$-dimensional subgroup $\SO(2)$, the $3$-form $\tr((a^{-1}\dd a)^3)=a^*\tr(\mu^3)$ vanishes. 
Moreover, by the Stokes formula, $\int_{\sz^3}\dd\tr(\dd a a^{-1}\omega)=0$.
In conclusion, 
\begin{equation}\label{ecs}
\int_{\sz^3} \cs(\nabla;(\pi^*I,\pi^*U,\pi^*V)) = \int_{\sz^3} \cs(\nabla;(I,J,K))=-1.
\end{equation}
Since $\pi$ is an isometric covering map of degree $p$, we have the equality of $3$-forms on $\sz^3$
\[\pi^*\cs(\nabla;(I,U,V))= \cs(\nabla;(\pi^*I,\pi^*U,\pi^*V)).
\]
Hence the left-hand side of \eqref{ecs} equals $p$ times the integral on $L_{p;q_1,q_2}$ of the Chern--Simons form $\cs(\nabla;(I,U,V))$, leading to 
\[\int_{L_{p;q_1,q_2}}\cs(\nabla;(I,U,V))=-\frac{1}{p}.\qedhere
\]
\end{proof}

\subsection*{Acknowledgement} Partially supported from the PNRR project III-C9-2023-I8-CF149.

\end{document}